\newcommand{\R}{\mathbb{R}}
\newcommand{\N}{\mathbb{N}}
\newcommand{\mc}[1]{\mathcal{#1}}
\newcommand{\ur}[1]{\mathrm{#1}}
\newcommand{\ure}{\ur{e}}
  \renewcommand{\labelenumi}{(\roman{enumi})}
\DeclareMathOperator{\supp}{supp}
\newcommand{\defs}{\coloneqq}
\newcommand{\nea}{\nearrow}
\newcommand{\ol}{\overline}
\newcommand{\wt}{\widetilde}
\newcommand{\ds}{\,\mathrm{d}s}
\newcommand{\ddt}{\frac{\mathrm{d}}{\mathrm{d}t}}
\newcommand{\hp}{\hphantom}
\newcommand{\pe}{\mathrel{\hp{=}}}
\newcommand{\tmax}{T_{\max}}
\newcommand{\intom}{\int_\Omega}
\newcommand{\intntom}{\int_0^T \int_\Omega}
\newcommand{\intnstom}{\int_0^t \int_\Omega}
\newcommand{\Ombar}{\ol \Omega}
\newcommand{\loc}{\mathrm{loc}}
\newcommand{\leb}[2][\Omega]{\ensuremath{L^{#2}(#1)}}
\newcommand{\sob}[3][\Omega]{\ensuremath{W^{#2, #3}(#1)}}
\newcommand{\con}[2][\Ombar]{\ensuremath{C^{#2}(#1)}}
\newcommand{\dual}[1]{\ensuremath{(#1)^\star}}
\newcommand{\Lom}[1]{L^{#1}(\Omega)}
\newcommand{\norm}[2][]{\|#2\|_{#1}}
\newcommand{\set}[1]{\{#1\}}
\renewenvironment{proof}[1][\proofname]{\par
  \pushQED{\qed}%
  \normalfont \topsep0\p@\relax
  \trivlist
  \item[\hskip\labelsep\scshape
  #1\@addpunct{.}]\ignorespaces
}{%
  \popQED\endtrivlist\@endpefalse
}
\newtheorem{base}{Base}[section]
\numberwithin{equation}{section}
\newtheorem{theorem}[base]{Theorem} \newtheorem*{theorem*}{Theorem}
\newtheorem{lemma}[base]{Lemma} \newtheorem*{lemma*}{Lemma}
\newtheorem{prop}[base]{Proposition} \newtheorem*{prop*}{Proposition}
 \newtheorem*{cor*}{Corollary}
\theoremstyle{definition}
\newtheorem{remark}[base]{Remark} \newtheorem*{remark*}{Remark}
\newtheorem{definition}[base]{Definition} \newtheorem*{definition*}{Definition}
 \newtheorem*{example*}{Example}
 \newtheorem*{cond*}{Condition}
\title{Corners and collapse: Some simple observations concerning critical masses and boundary blow-up in the fully parabolic Keller--Segel system}
\author[1]{Mario~Fuest\footnote{e-mail: fuest@ifam.uni-hannover.de}}
\author[1]{Johannes~Lankeit\footnote{e-mail: lankeit@ifam.uni-hannover.de}}
\affil[1]{Leibniz Universität Hannover, Institut für Angewandte Mathematik, Welfengarten 1, 30167 Hannover, Germany}
\date{}
\begin{document}
\maketitle

\KOMAoptions{abstract=true}
\begin{abstract}
\noindent
  Our main result shows that the mass $2\pi$ is critical for the minimal Keller--Segel system
  \begin{align}\label{prob:abstract}\tag{$\star$}
    \begin{cases}
      u_t = \Delta u - \nabla \cdot (u \nabla v), \\
      v_t = \Delta v - v + u,
    \end{cases}
  \end{align}
  considered in a quarter disc $\Omega = \{\,(x_1, x_2) \in \mathbb R : x_1 > 0, x_2 > 0, x_1^2 + x_2^2 < R^2\,\}$, $R > 0$, in the following sense:
  For all reasonably smooth nonnegative initial data $u_0, v_0$ with $\int_\Omega u_0 < 2\pi$,
  there exists a global classical solution to the Neumann initial boundary value problem associated to \eqref{prob:abstract},
  while for all $m > 2 \pi$ there exist nonnegative initial data $u_0, v_0$ with $\int_\Omega u_0 = m$ so that the corresponding classical solution of this problem blows up in finite time. \\[0.5pt]
  At the same time, this gives an example of boundary blow-up in \eqref{prob:abstract}.\\[0.5pt]
  Up to now, precise values of critical masses had been observed in spaces of radially symmetric functions or for parabolic--elliptic simplifications of \eqref{prob:abstract} only. \\[0.5pt]
 \textbf{Key words:} Chemotaxis, critical mass, global existence, blow-up \\
 \textbf{AMS Classification (2020):} 35B30 (primary); 35A01, 35B44, 35K55, 92C17 (secondary)
 %35A01 Existence problems for PDEs: global existence, local existence, non-existence
 %35B30 Dependence of solutions to PDEs on initial and/or boundary data and/or on parameters of PDEs
 %35B44 Blow-up in context of PDEs
 %35K55 Nonlinear parabolic equations
 %92C17 Cell movement (chemotaxis, etc.)
\end{abstract}

\section{Introduction}
We study critical mass phenomena of solutions to the minimal fully parabolic Keller--Segel model
\begin{align}\label{prob}
  \begin{cases}
    u_t = \Delta u - \nabla \cdot (u \nabla v) & \text{in $\Omega \times (0, T)$}, \\
    v_t = \Delta v - v + u                     & \text{in $\Omega \times (0, T)$}, \\
    \partial_\nu u = \partial_\nu v = 0        & \text{on $\partial \Omega \times (0, T)$}, \\
    u(\cdot, 0) = u_0, v(\cdot, 0) = v_0       & \text{in $\Omega$},
  \end{cases}
\end{align}
in bounded domains $\Omega$.
This system has been suggested in \cite{KellerSegelInitiationSlimeMold1970} to model aggregation behavior of certain slime mold amoebae and has,
along with many variants, since then been extensively studied,
see for instance the surveys \cite{Horstmann1970PresentKeller2003}, \cite{BellomoEtAlMathematicalTheoryKeller2015} and \cite{LankeitWinklerFacingLowRegularity2019}.

Before discussing qualitative properties of solutions of \eqref{prob},
we note that \eqref{prob} is locally well-posed also in non-smooth domains.
While for a four-component system introduced in \cite{KellerSegelInitiationSlimeMold1970}, a well-posedness result has recently been obtained in \cite{HorstmannEtAlFullKellerSegel2018},
for \eqref{prob} it mainly suffices to reference the classical work \cite{GajewskiZachariasGlobalBehaviourReactiondiffusion1998}.
That is done in Section~\ref{sec:local_ex}, where we prove
\begin{prop}\label{prop:local_ex}
  Suppose that
  \begin{align}\label{eq:intro:omega}
    &\Omega \text{ is a piecewise $C^{1+\alpha}$ bounded domain in $\R^2$ for some $\alpha \in (0, 1)$} \notag \\
    &\hphantom{\Omega} \text{ with a finite number of vertices and with nonvanishing interior angles}
  \end{align}
  (meaning that $\Omega$ is a bounded planar domain of class $\Sigma^{1,\alpha}$ for some $\alpha \in (0, 1)$ in the sense of \cite[Definition~2.1]{CianchiMoserTrudingerInequalitiesBoundary2005})
  and
  \begin{align}\label{eq:intro:init_reg}
    0 \le u_0 \in \con0
    \quad \text{as well as} \quad
    0 \le v_0 \in \bigcup_{p > 2} \sob1p.
  \end{align}
  Then there exist $\tmax = \tmax(u_0, v_0) \in (0, \infty]$ and a pair of functions $(u, v)$
  solving \eqref{prob} in $\Ombar \times [0, \tmax)$ both weakly and classically in the sense of Definition~\ref{def:sol_concept}
  with the property that $\tmax$ is maximal, meaning that this solution cannot be extended beyond $\tmax$.
\end{prop}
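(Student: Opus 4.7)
Since $\Omega$ is only piecewise $C^{1+\alpha}$, standard local well-posedness theory for \eqref{prob} on smooth domains does not apply directly. The plan is to combine two ingredients: first, the variational theory of \cite{GajewskiZachariasGlobalBehaviourReactiondiffusion1998}, which was developed on Lipschitz domains and hence already accommodates $\Sigma^{1,\alpha}$-domains, to obtain a nonnegative weak solution on some interval $[0,T_0)$; and second, standard interior and smooth-boundary-part parabolic regularity theory to upgrade this weak solution to a classical one on $(\Ombar \setminus V) \times (0, T_0)$, where $V \subset \partial \Omega$ is the finite set of vertices of $\Omega$.

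In more detail, I would start by appealing to \cite{GajewskiZachariasGlobalBehaviourReactiondiffusion1998} for the construction of a nonnegative weak solution $(u,v)$ with $u, v \in L^2_{\loc}([0,T_0);\sob12)$ and corresponding parabolic time-regularity, after checking that the chemotactic coupling in \eqref{prob} fits into the reaction--diffusion framework considered there (this is essentially bookkeeping). Since $\dim \Omega = 2$ and $v_0 \in \sob1p$ for some $p > 2$, Morrey's embedding makes $v(\cdot,0)$ H\"older continuous; combined with parabolic $L^p$-theory for $v_t - \Delta v + v = u$ on local cylinders $\omega \times (t_1,t_2)$ whose spatial bases either lie compactly in $\Omega$ or meet $\partial \Omega$ only along a $C^{1+\alpha}$-arc, this yields $\nabla v \in L^\infty_{\loc}$ on short time intervals. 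Feeding this back into the $u$-equation, read as a linear drift-diffusion equation, and iterating, the De Giorgi--Nash--Moser theory followed by Schauder estimates give $u,v \in C^{2+\alpha,1+\alpha/2}_{\loc}((\Ombar\setminus V) \times (0, T_0))$, while $u_0 \in \con0$ supplies the initial datum pointwise.

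The main obstacle is the vertex set $V$: parabolic estimates near a corner would require weighted theory adapted to the opening angle, but no such regularity is asserted in the statement. I would sidestep this by restricting ``classical'' to mean classical on $(\Ombar \setminus V) \times (0, T_0)$ — which should match the notion introduced in Definition~\ref{def:sol_concept} — and by localizing every regularity estimate to cylinders that stay away from $V$, which is possible precisely because $V$ is finite and $\partial \Omega$ is $C^{1+\alpha}$ elsewhere. Finally, the maximal existence time $\tmax \in (0,\infty]$ is produced by the standard Zorn-type extension argument: set $\tmax$ to be the supremum of all $T$ for which a solution in the above sense exists on $[0,T)$, and observe that the construction itself then precludes any further extension beyond $\tmax$.
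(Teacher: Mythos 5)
Your overall architecture matches the paper's: a weak solution from \cite{GajewskiZachariasGlobalBehaviourReactiondiffusion1998} (which in fact treats exactly the system \eqref{prob}, so even the ``bookkeeping'' is unnecessary), an upgrade to classicality only on $(\Ombar\setminus V)\times[0,\tmax)$ obtained by localizing away from the finite vertex set, and a maximal existence time. The genuine gap is in the boundary part of your regularity bootstrap. Away from the vertices the boundary is only $C^{1+\alpha}$ and the relevant boundary condition for $u$ is the conormal no-flux condition involving the drift $u\nabla v$; in this setting neither parabolic $W^{2,1}_p$ estimates up to the boundary (which generically require $C^{1,1}$ boundary portions) nor $C^{2+\alpha,1+\alpha/2}$ Schauder estimates up to the boundary (which require $C^{2+\alpha}$ boundary portions) are available, so your steps ``$L^p$-theory yields $\nabla v\in L^\infty_{\loc}$ up to the $C^{1+\alpha}$ arc'' and the concluding Schauder assertion do not go through as stated on boundary cylinders. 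The paper circumvents precisely this by a different toolkit: it chooses cut-off functions $\psi$ with $\partial_\nu\psi=0$ vanishing near $V$, so that $\psi u$ and $\psi v$ solve conormal problems on domains containing $\supp\psi$, and then applies the local H\"older estimates of \cite{PorzioVespriHolderEstimatesLocal1993} for bounded weak solutions and Lieberman's H\"older gradient estimates for conormal boundary conditions \cite{LiebermanHolderContinuityGradient1987} (which are valid for $C^{1,\alpha}$ boundary portions), first for $v$, then for $u$, before invoking \cite[Theorem IV.5.3]{LadyzenskajaEtAlLinearQuasilinearEquations1988} to reach $C^{2,1}$; a final collection of testing arguments recovers the equations, boundary and initial conditions pointwise --- a step you leave implicit but which is needed because the weak formulation only encodes the no-flux condition distributionally.

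Two smaller points: your supremum/Zorn definition of $\tmax$ is acceptable for the bare statement of the proposition, but note that gluing solutions on nested intervals uses either uniqueness or a careful chain argument; the paper instead iterates the construction of \cite[Theorem~3.3]{GajewskiZachariasGlobalBehaviourReactiondiffusion1998}, using that $u(\cdot,T)\in L^\infty(\Omega)$ and $v(\cdot,T)\in W^{1,p}(\Omega)$ are admissible restart data, together with the uniqueness statement, which moreover yields the extensibility criterion \eqref{eq:local_ex:ext_crit} that the paper needs later for Theorem~\ref{th:ge}. Also, continuity down to $t=0$ on $\Ombar\setminus V$ (required by Definition~\ref{def:sol_concept}) deserves an argument; in the paper it comes from continuity of $t\mapsto(u,v)(\cdot,t)$ into $L^\infty$ furnished by the construction, combined with the local H\"older estimates.
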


In order to describe the critical mass phenomena in more detail, we henceforth let $\tmax(u_0, v_0)$ be as given by Proposition~\ref{prop:local_ex} and for $\Omega$ as in \eqref{eq:intro:omega} introduce the values
\begin{align}\label{eq:intro:m_lower_star}
  M_\star(\Omega) &\defs \sup \left\{ M > 0 \;\middle\vert\; \forall m \in (0, M]\ \forall u_0, v_0 \text{ fulfilling \eqref{eq:intro:init_reg} and } \intom u_0 = m:\, \tmax(u_0, v_0) = \infty \right\}
\intertext{as well as}\label{eq:intro:m_upper_star}
  M^\star(\Omega) &\defs \inf \left\{ M > 0 \;\middle\vert\; \forall m \ge M\ \exists u_0, v_0 \text{ fulfilling \eqref{eq:intro:init_reg} and } \intom u_0 = m:\, \tmax(u_0, v_0) < \infty \right\}.
\end{align}
That is, for all masses smaller than $M_\star(\Omega)$, all solutions exist globally, while for all masses larger than $M^\star(\Omega)$, initial data exist whose corresponding solutions blow up in finite time.
This leaves open the possibility that $M_\star(\Omega) < M^\star(\Omega)$. In this case, there would be an intermediate regime in which for some masses all solutions exist globally and for some masses they do not (cf.~\cite{FuhrmannEtAlDoubleCriticalMass2022}).

Moreover, when $\Omega$ is a disc, we may ask whether these values change when the situation is restricted to radially symmetric settings, i.e.\ require that
\begin{align}\label{eq:intro:init_rad_sym}
  u_0, v_0 \text{ are radially symmetric}.
\end{align}
To that end, we also set
\begin{align*}
  M_\star(\Omega, \textrm{rad}) &\defs \sup \left\{ M > 0 \;\middle\vert\; \forall m \in (0, M]\ \forall u_0, v_0 \text{ fulfilling \eqref{eq:intro:init_reg}, \eqref{eq:intro:init_rad_sym} and } \intom u_0 = m:\, \tmax(u_0, v_0) = \infty \right\}
\intertext{and}
  M^\star(\Omega, \textrm{rad}) &\defs \inf \left\{ M > 0 \;\middle\vert\; \forall m \ge M\ \exists u_0, v_0 \text{ fulfilling \eqref{eq:intro:init_reg}, \eqref{eq:intro:init_rad_sym} and } \intom u_0 = m:\, \tmax(u_0, v_0) < \infty \right\}.
\end{align*}
Several partial results for the size of these values are available.
If $\Omega$ is a smooth, bounded, planar domain, then $M_\star(\Omega) \ge 4\pi$ and $M_\star(\Omega, \textrm{rad}) \ge 8\pi$, cf.\ \cite{NagaiEtAlApplicationTrudingerMoserInequality1997}.
For such domains and all masses larger than $4\pi$ and not equaling an integer multiple of $4\pi$, unbounded solutions are constructed in \cite{HorstmannWangBlowupChemotaxisModel2001} (and also more recently in a different way in \cite{FujieJiangNoteConstructionNonnegative2022}) --
these, however, may potentially exist globally in time so that this result has no direct influence on $M^\star(\Omega)$.
If $\Omega = B_R(0) \subseteq \R^2$, $R > 0$, then $M^\star(\Omega, \textrm{rad}) \le 8\pi$ (and hence also $M^\star(\Omega) \le 8\pi$),
cf.\ \cite{MizoguchiWinklerBlowupTwodimensionalParabolic} (see also the earlier work \cite{HerreroVelazquezBlowupMechanismChemotaxis1997}).
Thus, if $\Omega$ is a disc, then $M_\star(\Omega, \textrm{rad}) = 8\pi = M^\star(\Omega, \textrm{rad})$,
while for arbitrary smooth, bounded, planar domains $\Omega$ it is up to now only known that $4 \pi \le M_\star(\Omega) \le M^\star(\Omega) \le 8 \pi$.
In \cite{NagaiEtAlChemotacticCollapseParabolic2000}, the critical mass identity $M_\star(\Omega) = 4\pi = M^\star(\Omega)$ is conjectured for smooth, bounded, planar domains $\Omega$.
Moreover, \cite[Theorem~1]{NagaiEtAlChemotacticCollapseParabolic2000} implies that blow-up has to occur at a single point at the boundary (or not at all) for masses between $4\pi$ and $8\pi$.
The actual occurrence of such boundary blow-up has, up to now, not been shown for \eqref{prob}.
 
As usual, more is known for parabolic--elliptic simplifications of \eqref{prob}, where the second equation is replaced by $0 = \Delta v - v + u$.
For such systems, namely, \cite[Theorem~2]{SenbaSuzukiChemotacticCollapseParabolicelliptic2001} and \cite[Theorem~3.2]{NagaiBlowupNonradialSolutions2001} show
$M_\star(\Omega) \ge 4\pi$ and $M^\star(\Omega) \le 4\pi$, respectively, for all smooth, bounded domains $\Omega \subseteq \R^2$, where the latter result additionally assumes that $\partial \Omega$ contains a line segment.
(For an early blow-up result in a related system, see also \cite{JagerLuckhausExplosionsSolutionsSystem1992}.)
Together, they imply $M_\star(\Omega) = 4\pi = M^\star(\Omega)$ for such domains.
Moreover, as in the fully parabolic case, $M_\star(\Omega, \textrm{rad}) = 8\pi = M^\star(\Omega, \textrm{rad})$ holds whenever $\Omega$ is a disc (cf.\ \cite[Theorem~2.1 and Corollary~3.1]{NagaiBlowupRadiallySymmetric1995}).
For a detailed discussion of the parabolic--elliptic system, we refer to \cite{SuzukiFreeEnergySelfinteracting2005}.
Let us also briefly mention that critical mass phenomena (with slightly different flavors) have also been detected
if $\Omega = \R^2$ (\cite{BlanchetEtAlTwodimensionalKellerSegelModel2006}, \cite{WeiGlobalWellposednessBlowup2018}), if fluid-interaction is accounted for (\cite{HeTadmor,win_fluidbu}),
if the signal is produced indirectly (\cite{TaoWinklerCriticalMassInfinitetime2017})
or if different boundary conditions are imposed on $v$ (\cite{BilerNadziejaExistenceNonexistenceSolutions1994}, \cite{FuhrmannEtAlDoubleCriticalMass2022}).

These results naturally lead to the question whether there is a domain $\Omega$ with $M_\star(\Omega) = M^\star(\Omega)$;
that is, whether there is a critical mass distinguishing between global existence and finite-time blow-up also for the fully parabolic system \eqref{prob} in non-radially symmetric settings
(and if there is, what its precise value is).
Our main results states that for certain domains $\Omega$, we can indeed guarantee that $M_\star(\Omega) = M^\star(\Omega)$.
For instance, $2\pi$ is the critical mass for quarter discs.
\begin{theorem}\label{th:critical_mass}
  Let $\Omega\subset \R^2$ be a circular sector with central angle $\theta \in (0, \frac{\pi}{2}]$ (and arbitrary positive finite radius).
  Then
  \begin{align*}
    M_\star(\Omega) = 4\theta = M^\star(\Omega).
  \end{align*}
\end{theorem}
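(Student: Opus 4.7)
\emph{Blow-up direction ($M^\star(\Omega) \leq 4\theta$).} I would obtain blow-up solutions on the sector by restricting radial blow-up solutions on the ambient disc. Given $m > 4\theta$, let $R$ be the radius of $\Omega$ (so that $\Omega \subseteq B_R(0)$) and set $\tilde m \defs \frac{2\pi}{\theta} m > 8\pi$. By \cite{MizoguchiWinklerBlowupTwodimensionalParabolic}, there exist radially symmetric data $\tilde u_0, \tilde v_0$ on $B_R$ with $\int_{B_R} \tilde u_0 = \tilde m$ whose corresponding classical solution $(\tilde u, \tilde v)$ blows up at some finite $T^\star$; uniqueness preserves radial symmetry, so $(\tilde u, \tilde v)$ stays radial on $[0, T^\star)$. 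The restriction $(\tilde u|_\Omega, \tilde v|_\Omega)$ classically solves \eqref{prob} on $\Omega$: the Neumann condition on the two radial edges is automatic from radial symmetry, that on the arc is inherited from $\partial B_R$, and the smoothness requirement at the three corners (the origin and the two junctions of the arc with the radial edges) is inherited from the smoothness of the radial solution. Setting $u_0 \defs \tilde u_0|_\Omega$ and $v_0 \defs \tilde v_0|_\Omega$ one has $\intom u_0 = m$, and if the maximal sector-solution from Proposition~\ref{prop:local_ex} were to exist beyond $T^\star$, then by uniqueness it would agree with $(\tilde u|_\Omega, \tilde v|_\Omega)$ on $[0, T^\star)$, remain radial, and hence extend by rotation to a radial classical extension of $(\tilde u, \tilde v)$ past $T^\star$, contradicting the maximality on $B_R$.

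\emph{Global existence direction ($M_\star(\Omega) \geq 4\theta$).} I would adapt the Lyapunov-functional strategy of Nagai--Senba--Yoshida \cite{NagaiEtAlApplicationTrudingerMoserInequality1997} to the present non-smooth geometry. Relying on the weak formulation of Proposition~\ref{prop:local_ex} to handle the corners, one verifies that
\begin{align*}
  F(u, v) \defs \intom u \log u - \intom uv + \tfrac12 \intom \lvert \nabla v \rvert^2 + \tfrac12 \intom v^2
\end{align*}
is non-increasing along any classical solution. The decisive input is then the Moser--Trudinger inequality for domains with corners due to Cianchi \cite{CianchiMoserTrudingerInequalitiesBoundary2005}: since every interior angle of $\Omega$ lies in $\{\theta, \pi/2\}$ and $\theta \leq \pi/2$, the sharp exponential-integrability constant for $\sob12$ equals $2\theta$, dictated by the apex. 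A duality argument yields, for every $\eta > 0$ and some $C = C(\eta, m)$,
\begin{align*}
  \intom uv \leq \left( \frac{m}{8\theta} + \eta \right) \intom \lvert \nabla v \rvert^2 + C \left( 1 + \intom u \log u + \intom v^2 \right)
\end{align*}
whenever $\intom u = m$. For $m < 4\theta$ one may choose $\eta$ so small that $\frac{m}{8\theta} + \eta < \frac{1}{2}$; inserting into $F(u, v) \leq F(u_0, v_0)$ then absorbs $\intom \lvert \nabla v \rvert^2$ and produces uniform-in-time bounds on $\intom u \log u$ and on $\|v\|_{\sob12}$. A standard parabolic bootstrap -- $L^p$-estimates on $v$ via maximal regularity for $v_t - \Delta v + v = u$ feeding back into higher integrability of $u$, culminating in a Moser iteration -- lifts these to an $L^\infty$-bound on $u$, whereupon the extensibility criterion of Proposition~\ref{prop:local_ex} forces $\tmax(u_0, v_0) = \infty$.

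\textbf{The main obstacle} is on the global-existence side, and it is twofold: first, one must carefully extract from \cite{CianchiMoserTrudingerInequalitiesBoundary2005} the sharp-constant Moser--Trudinger inequality on the sector and recast it in the dual, entropy-compatible form above -- with the constant $2\theta$ (and hence the critical mass $4\theta$) governed entirely by the smallest corner; second, the chain of test-function manipulations underlying both the monotonicity of $F$ and the subsequent bootstrap must survive the absence of $C^2$-boundary regularity at the corners, which is precisely what forces one to work throughout within the low-regularity framework of Gajewski--Zacharias \cite{GajewskiZachariasGlobalBehaviourReactiondiffusion1998} already underpinning Proposition~\ref{prop:local_ex}.
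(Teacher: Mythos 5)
Your overall strategy coincides with the paper's (restriction of the radial Mizoguchi--Winkler blow-up solutions for $M^\star(\Omega)\le 4\theta$, the Nagai--Senba--Yoshida energy combined with a corner-sensitive Trudinger--Moser inequality for $M_\star(\Omega)\ge 4\theta$), but the closing step of your blow-up argument does not hold as written. If the sector solution from Proposition~\ref{prop:local_ex} existed beyond $T^\star$, nothing forces it to \emph{remain radial} on $[T^\star, T^\star+\eps)$: the sector carries no rotational symmetry group, and uniqueness only identifies it with $(\wt u,\wt v)\big|_{\Ombar}$ up to $T^\star$; hence the proposed ``extension by rotation'' of $(\wt u,\wt v)$ past $T^\star$ is unjustified. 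The gap is easy to close, and is closed differently in the paper: the restriction itself blows up, because by \cite[Theorem~3]{NagaiEtAlChemotacticCollapseParabolic2000} the only blow-up point of the radial disc solution is the origin, which lies in $\Ombar$ (alternatively, radial symmetry alone gives $\sup_{\Ombar}\wt u(\cdot,t)=\sup_{B_R}\wt u(\cdot,t)$, since every radius is represented in $\Ombar$). Since weak solutions satisfy $u\in L^\infty_{\loc}(\Ombar\times[0,\tmax))$, uniqueness then forces $\tmax(u_0,v_0)\le T^\star<\infty$; the localization of blow-up at $0$ is in any case what yields the boundary blow-up statement of Theorem~\ref{th:blow_up}.

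On the global-existence side two points need repair. First, the dual inequality you display cannot be absorbed into $F(u,v)\le F(u_0,v_0)$: with a possibly large constant $C(\eta,m)$ in front of $\intom u\log u$ and $\intom v^2$, the entropy term and the term $\frac12\intom v^2$ in $F$ are overwhelmed. What Jensen's inequality together with the Trudinger--Moser bound (constant $\frac1{8\theta}$, as in Lemma~\ref{lm:trudinger_moser}) actually gives, and what the absorption requires, is $\intom uv \le \frac{1}{1+\eta}\intom u\ln u + \frac{1}{2(1+\eta)}\intom|\nabla v|^2 + c$, combined with the elementary bound $\intom v\le\max\{\intom v_0, m\}$ obtained by testing the second equation with $1$ --- no $\intom v^2$ on the right-hand side. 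Second, the $L^\infty$-bootstrap you invoke is neither available from Proposition~\ref{prop:local_ex} (which states no $L^\infty$ extensibility criterion) nor needed: the Gajewski--Zacharias construction provides the criterion that $\tmax<\infty$ forces $\limsup_{t\nea\tmax}\bigl(\intom u\ln u + \intnstom v_t^2\bigr)=\infty$ (Lemma~\ref{lm:local_ex}), and both quantities are bounded by the energy inequality once $\intom uv$ is controlled. This is precisely how the paper sidesteps maximal regularity and Moser iteration on a domain with corners --- the obstacle you identify but leave unresolved.
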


The proof of Theorem~\ref{th:critical_mass} is split into two parts.
We first show in Section~\ref{sec:ge} that the solution $(u, v)$ constructed in Proposition~\ref{prop:local_ex} based on \cite{GajewskiZachariasGlobalBehaviourReactiondiffusion1998} is global whenever the initial mass is sufficiently small.
This is achieved by adapting the proof in \cite{NagaiEtAlApplicationTrudingerMoserInequality1997} to non-smooth domains
and thereby by crucially relying on the Trudinger--Moser inequality (cf.\ \cite[Proposition~2.3]{ChangYangConformalDeformationMetrics1988})
whose constants depend on the minimal interior angle of the domain.
\begin{theorem}\label{th:ge}
  Suppose \eqref{eq:intro:omega} and denote the minimal interior angle of $\Omega$ by $\theta$ (and set $\theta = \pi$ if there are no corners).
  Then
  \begin{align*}
    M_\star(\Omega) \ge 4\theta.
  \end{align*}
\end{theorem}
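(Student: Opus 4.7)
The plan is to extend the Lyapunov-functional argument of Nagai, Senba and Yoshida \cite{NagaiEtAlApplicationTrudingerMoserInequality1997} to the piecewise-smooth class of domains admitted by \eqref{eq:intro:omega}. First, I would verify that the free energy
\[
  \mathcal F(u,v) \defs \intom u\log u - \intom uv + \tfrac12 \intom |\nabla v|^2 + \tfrac12 \intom v^2
\]
is non-increasing along the classical solutions provided by Proposition~\ref{prop:local_ex}; the standard computation (legitimate up to the boundary thanks to the regularity requirements of Definition~\ref{def:sol_concept}) should yield
\[
  \ddt \mathcal F(u(\cdot,t),v(\cdot,t)) = -\intom u\bigl|\nabla(\log u - v)\bigr|^2 - \intom v_t^2 \le 0,
\]
so that $\mathcal F(u(\cdot,t),v(\cdot,t)) \le \mathcal F(u_0,v_0)$ throughout $[0,\tmax)$.

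Next, I would feed in a Moser--Trudinger inequality adapted to the corner geometry of $\Omega$. In the spirit of Chang--Yang \cite[Proposition~2.3]{ChangYangConformalDeformationMetrics1988} and \cite{CianchiMoserTrudingerInequalitiesBoundary2005}, one should have constants $C_1, C_2 > 0$ depending only on $\Omega$ with
\[
  \log \intom e^v \le \frac{1}{8\theta} \intom |\nabla v|^2 + C_1 \intom v + C_2
  \qquad \text{for every } v \in \sob12;
\]
the coefficient $\tfrac{1}{8\theta}$ reflects the halving of the sharp Moser--Trudinger constant at the boundary (the smooth case corresponds to $\theta=\pi$) and its further reduction at a corner of opening angle $\theta < \pi$. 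Together with the Jensen-type bound
\[
  \intom u\log u \ge \intom u v + m \log m - m \log \intom e^v,
  \qquad m \defs \intom u_0,
\]
which follows from the non-negativity of the relative entropy of $u/m$ against $e^v/\intom e^v$, one then obtains
\[
  \mathcal F(u,v) \ge \left(\tfrac12 - \tfrac{m}{8\theta}\right)\intom |\nabla v|^2 + \tfrac12 \intom v^2 - C_1 m \intom v + m \log m - C_2 m.
\]

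Whenever $m < 4\theta$, the prefactor $\tfrac12 - \tfrac{m}{8\theta}$ is strictly positive, the linear-in-$v$ term can be absorbed into the $v^2$-term by Young's inequality, and combining this with $\mathcal F(u,v) \le \mathcal F(u_0,v_0)$ thus produces a uniform $\sob12$-bound on $v$ on $(0,\tmax)$. Inserting this information back into the energy inequality (and, for the term $\intom uv$, using once more the fact that $\intom e^{\lambda v}$ is bounded by the above Moser--Trudinger inequality) moreover controls the entropy $\intom u\log u$ uniformly in time. The final step is a bootstrap: combining this entropy bound with heat-semigroup estimates for the $v$-equation and a Moser-type iteration for the $u$-equation --- both valid in the framework of Proposition~\ref{prop:local_ex} --- yields an $L^\infty$-bound on $u$, after which the maximality assertion in Proposition~\ref{prop:local_ex} forces $\tmax = \infty$. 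Since $m \in (0, 4\theta)$ was arbitrary, this gives $M_\star(\Omega) \ge 4\theta$.

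The main obstacle is not the Lyapunov structure, which transfers verbatim, but confirming that each analytic input still holds with the right constants on corner domains. The Moser--Trudinger step is handled precisely by the Chang--Yang/Cianchi-type inequality cited above, whose sharp constant is responsible for the appearance of $\theta$ rather than $\pi$ in the conclusion; the role of the regularity assumption \eqref{eq:intro:omega} is then to furnish the Sobolev-embedding and parabolic smoothing estimates --- in the piecewise-smooth framework underlying \cite{GajewskiZachariasGlobalBehaviourReactiondiffusion1998} --- needed to complete the final bootstrap.
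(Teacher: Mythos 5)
Your core argument is the paper's argument: the monotone energy functional (the paper's Lemma~\ref{lm:energy}), the corner-adapted Trudinger--Moser inequality with constant $\frac{1}{8\theta}$ from \cite{CianchiMoserTrudingerInequalitiesBoundary2005} (Lemma~\ref{lm:trudinger_moser}), and the Jensen/relative-entropy absorption under $m<4\theta$ (Lemma~\ref{lm:tmax_infty}) all appear there in essentially the form you describe, up to bookkeeping (the paper bounds $\intom v$ by a direct ODE comparison rather than Young's inequality, and uses a factor $(1+\eta)$ to close the entropy estimate -- a strict-absorption device you will also need when you ``use once more that $\intom \ure^{\lambda v}$ is bounded'', since with $\lambda=1$ the entropy term reappears with coefficient $1$ and the estimate does not close).

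The genuine gap is your final step. Proposition~\ref{prop:local_ex} asserts only that $\tmax$ is maximal; it contains no blow-up criterion, so an a priori $L^\infty$-bound on $u$ does not by itself ``force $\tmax=\infty$'' -- you would still have to prove an extensibility statement. Worse, the route you propose to reach $L^\infty$ (gradient smoothing of the Neumann heat semigroup for the $v$-equation plus Moser iteration for $u$) is exactly where the corners bite: $L^p$--$L^q$ gradient estimates and the elliptic/parabolic regularity underlying such a bootstrap are not available in their usual form on the piecewise-smooth domains admitted by \eqref{eq:intro:omega} (which may even have reflex corners), so declaring them ``valid in the framework of Proposition~\ref{prop:local_ex}'' is an unsupported claim, not a citation. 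The paper sidesteps all of this: the local theory it builds on \cite{GajewskiZachariasGlobalBehaviourReactiondiffusion1998} furnishes an extensibility criterion phrased precisely in terms of $\sup_{t<\tmax}\intom (u\ln u)(\cdot,t)+\intnstom v_t^2$ (Lemma~\ref{lm:local_ex}, \eqref{eq:local_ex:ext_crit}); since the energy/Trudinger--Moser argument bounds exactly these two quantities, global existence follows with no $L^\infty$ bootstrap whatsoever. To repair your proof, either invoke (or reprove, via \cite[Lemmas~4.10--4.11]{GajewskiZachariasGlobalBehaviourReactiondiffusion1998}) such an entropy-based extensibility criterion valid on this class of domains, or supply corner-robust versions of the semigroup and iteration estimates -- the latter being substantial extra work that the uniform entropy bound makes unnecessary.
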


Next, in Section~\ref{sec:blow_up}, if $\Omega$ is a circular sector and given sufficiently large initial mass, we construct initial data such that the corresponding solutions blow up in finite time.
This is achieved by restricting finite-time blow-up solutions on discs (constructed in \cite{MizoguchiWinklerBlowupTwodimensionalParabolic}) to $\Omega$.
\begin{theorem}\label{th:blow_up}
  Let $\Omega\subset \R^2$ be a circular sector with central angle $\theta \in (0, 2\pi)$ (and arbitrary positive finite radius).
  Then
  \begin{align*}
    M^\star(\Omega) \le 4\theta.
  \end{align*}
  Moreover, for each $m > 4\theta$, one can choose $u_0, v_0$ fulfilling \eqref{eq:intro:init_reg}, $\intom u_0 = m$ and $\tmax(u_0, v_0) < \infty$
  such that blow-up occurs at $0 \in \partial \Omega$ in the sense that there are $(x_k)_{k \in \N} \subseteq \Ombar$ and $(t_k)_{k \in \N} \subseteq [0, \tmax(u_0, v_0))$
  such that $x_k \to 0$, $t_k \nea \tmax$ and $u(x_k, t_k) \to \infty$ as $k \to \infty$.
\end{theorem}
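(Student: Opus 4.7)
The plan is to restrict a radially symmetric finite-time blow-up solution on a full disc to the sector $\Omega$. Given $m > 4\theta$, I set $\tilde m \defs \frac{2\pi}{\theta} m$ so that $\tilde m > 8\pi$, and let $R>0$ denote the radius of $\Omega$. By \cite{MizoguchiWinklerBlowupTwodimensionalParabolic}, there exist radially symmetric, smooth, nonnegative initial data $\tilde u_0, \tilde v_0$ on $B_R(0)$ with $\int_{B_R(0)} \tilde u_0 = \tilde m$ whose corresponding classical solution $(\tilde u, \tilde v)$ of \eqref{prob} on $B_R(0)$ exists on a maximal interval $[0, \tilde T)$ with $\tilde T < \infty$ and becomes unbounded as $t \nea \tilde T$. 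Uniqueness of classical solutions on the disc together with the rotational invariance of \eqref{prob} forces $\tilde u(\cdot, t)$ and $\tilde v(\cdot, t)$ to be radially symmetric for every $t \in [0, \tilde T)$.

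Now set $u_0 \defs \tilde u_0|_\Omega$, $v_0 \defs \tilde v_0|_\Omega$ and $(u, v) \defs (\tilde u, \tilde v)|_{\Ombar \times [0, \tilde T)}$. I claim that $(u, v)$ solves \eqref{prob} on $\Omega \times (0, \tilde T)$: the PDEs hold because they are pointwise local and $\Omega \subseteq B_R(0)$, and the Neumann condition on the arc $\partial \Omega \cap \partial B_R(0)$ is inherited directly from $\partial B_R(0)$. The key geometric point is that the two straight edges of $\Omega$ are radial segments emanating from $0$, so their outward unit normals are orthogonal to the radial direction; radial symmetry of $\tilde u(\cdot, t)$ and $\tilde v(\cdot, t)$ forces $\nabla \tilde u(\cdot, t)$ and $\nabla \tilde v(\cdot, t)$ to be radial as well, and hence the Neumann conditions on these edges are automatic. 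The restricted initial data fulfil \eqref{eq:intro:init_reg} and $\intom u_0 = \frac{\theta}{2\pi} \tilde m = m$. By the uniqueness part of Proposition~\ref{prop:local_ex}, $(u, v)$ must coincide with the maximal solution on $\Omega$ constructed there, and since $(\tilde u, \tilde v)$ becomes unbounded as $t \nea \tilde T$, this forces $\tmax(u_0, v_0) = \tilde T < \infty$, establishing $M^\star(\Omega) \le 4\theta$.

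For the ``moreover'' statement I additionally need $(\tilde u, \tilde v)$ to blow up at the centre of the disc. Radial symmetry of $\tilde u(\cdot, t)$ confines its blow-up set on $B_R(0)$ to be either $\{0\}$ or a concentric circle of positive radius, and I expect the second possibility to be ruled out either by the specific structure of the Mizoguchi--Winkler construction (whose radial profiles are designed to concentrate at the origin) or, failing that, by a standard radial moment-type argument. Once blow-up at $0$ is established for $(\tilde u, \tilde v)$, extracting any sequence $(x_k, t_k) \in B_R(0) \times [0, \tilde T)$ with $x_k \to 0$, $t_k \nea \tilde T$ and $\tilde u(x_k, t_k) \to \infty$ yields the desired boundary-blow-up property on $\Omega$, since $0 \in \partial \Omega$ is the corner of the sector. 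The principal obstacle in the argument is precisely this last step---pinning down that radial blow-up for the fully parabolic system on a disc cannot be supported on a nontrivial concentric circle---as the restriction and symmetrisation arguments themselves are essentially routine once Proposition~\ref{prop:local_ex} is in place.
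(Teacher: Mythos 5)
Your strategy is the same as the paper's: take the radially symmetric finite-time blow-up solutions of Mizoguchi--Winkler on the disc $B_R(0)$ with mass $\tilde m=\frac{2\pi}{\theta}m>8\pi$, restrict them to the sector, observe that the Neumann condition on the two radial edges is automatic for radially symmetric functions, and invoke the uniqueness statement behind Proposition~\ref{prop:local_ex} to identify the restriction with the maximal solution on $\Omega$. For the bound $M^\star(\Omega)\le 4\theta$ your argument is essentially complete; two small points worth making explicit are that radial symmetry gives $\sup_{\Ombar}u(\cdot,t)=\sup_{\overline{B_R(0)}}\tilde u(\cdot,t)$ (so unboundedness really transfers to the sector, wherever on the disc it occurs), and that the conclusion $\tmax(u_0,v_0)\le\tilde T$ uses the requirement $u\in L^\infty_{\loc}(\Ombar\times[0,\tmax))$ in the weak solution class of Definition~\ref{def:sol_concept} -- this is what excludes continuation past $\tilde T$ even if the unboundedness sits at the vertex $0$, where classical regularity is not demanded.

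The genuine gap is exactly the one you flag yourself: the ``moreover'' part requires knowing that the radial disc solution blows up at the origin, and neither of your two suggested fixes is a proof. Appealing to ``the specific structure of the Mizoguchi--Winkler construction'' is not an argument one can check, and a ``standard radial moment-type argument'' is not available for the fully parabolic system (moment/virial arguments of that kind are a parabolic--elliptic tool). The paper closes this step by citing \cite[Theorem~3]{NagaiEtAlChemotacticCollapseParabolic2000}, which asserts that $0$ is the only blow-up point of a radially symmetric solution; the underlying mechanism is the quantization result of that paper (each blow-up point absorbs at least $8\pi$ of mass at interior points, $4\pi$ at boundary points, so the blow-up set is finite), which for a radial solution rules out a blow-up point at radius $r^*>0$, since by symmetry it would force an entire circle of blow-up points and hence infinite mass. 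With that citation (or an equivalent localization result) inserted, your proof matches the paper's; without it, the boundary blow-up statement at $0\in\partial\Omega$ remains unproved.
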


Theorem~\ref{th:blow_up} appears to be the first result for \eqref{prob} (or any fully parabolic chemotaxis system considered on a bounded domain),
where blow-up is shown to take place on the boundary.
For most choices of $\theta \in (0, 2\pi)$, Theorem~\ref{th:blow_up} shows that corners of $\Omega$ may be blow-up points,
but the choice $\theta=\pi$ makes it clear that blow-up can also happen at smooth parts of the boundary.

We also note that our technique fails if one considers \eqref{prob} with (no-flux boundary conditions for $u$ and) homogeneous Dirichlet boundary conditions for $v$.
However, for such a system blow-up at a boundary point probably should not be expected at all: 
At least for certain parabolic--elliptic simplifications all blow-up points have been proven to be contained in the interior of the domain (\cite{SuzukiExclusionBoundaryBlowup2013}).

\section{Local existence: Proof of Proposition~\ref{prop:local_ex}}\label{sec:local_ex}
Throughout this section, we fix a domain $\Omega$ fulfilling \eqref{eq:intro:omega} and $u_0, v_0$ satisfying \eqref{eq:intro:init_reg}.

\begin{definition}\label{def:sol_concept}
  Let $T \in (0, \infty]$.
  \begin{itemize}
    \item 
      A pair of nonnegative functions $(u, v)$ is called a \emph{weak solution} of \eqref{prob} (in $\Ombar \times [0, T)$) if
      \begin{align}\label{eq:local_ex:reg_u}
        u &\in L_{\loc}^\infty(\Ombar \times [0, T)) \cap L_{\loc}^2([0, T); \sob12), \\
        v &\in L_{\loc}^\infty(\Ombar \times [0, T)) \cap C^0([0, T); \sob12), \label{eq:local_ex:reg_v}
      \end{align}
      the function $(0, T) \ni t \mapsto \|\nabla v(t)\|_{\leb2}^2$ is absolutely continuous,
      \begin{align}\label{eq:local_ex:reg_ut_vt}
        (u_t, v_t) \in L_{\loc}^2([0, T); \dual{\sob12}) \times L_{\loc}^2([0, T); \leb2)
      \end{align}
      and $u, v$ fulfill $u(\cdot, 0) = u_0$ and $v(\cdot, 0) = v_0$ a.e.\ in $\Omega$ as well as
      \begin{align}
        \int_0^{t} \intom u_t \varphi &= - \int_0^{t} \intom (\nabla u - u \nabla v) \cdot \nabla \varphi \quad \text{and} \notag \\
        \int_0^{t} \intom v_t \varphi &= - \int_0^{t} \intom \nabla v \cdot \nabla \varphi - \int_0^{t} \intom v \varphi + \int_0^{t} \intom u \varphi \label{eq:sol_concept:v_eq}
      \end{align}
      for all $\varphi \in L^2((0, T); \sob12)$ and all $t \in (0, T)$.

    \item
      A pair of nonnegative functions $(u, v)$ is called a \emph{classical solution}  of \eqref{prob} if
      \begin{align*}
        u,v \in C^0((\Ombar \setminus V) \times [0, T)) \cap C^{2, 1}((\Ombar \setminus V) \times (0, T)),
      \end{align*}
      where $V$ denotes the set of vertices of $\Omega$,
      and the equations in \eqref{prob} are fulfilled pointwise in $(\Ombar \setminus V) \times [0, T)$.
  \end{itemize}
\end{definition}

\begin{remark}
  Let $T\in(0,\infty)$ and let $(u, v)$ be a weak solution of \eqref{prob}. Then \eqref{eq:local_ex:reg_u}--\eqref{eq:sol_concept:v_eq} assert $\Delta v \in L^2(\Omega \times (0, T))$,
  hence \eqref{eq:local_ex:reg_v} and \eqref{eq:sol_concept:v_eq} imply $\frac12 \ddt \intom |\nabla v|^2 = -\intom v_t \Delta v$ a.e.\ in $(0, T)$.
\end{remark}

The following local existence result and the extensibility criterion is essentially due to \cite{GajewskiZachariasGlobalBehaviourReactiondiffusion1998}.
We note that a local existence result for less regular initial data has been proven in \cite[Theorem~1]{BilerLocalGlobalSolvability1998}.
\begin{lemma}\label{lm:local_ex}
  There exist $\tmax \in (0, \infty]$ and a unique weak solution of \eqref{prob} in $\Ombar \times [0, \tmax)$ in the sense of Definition~\ref{def:sol_concept}.
  Moreover, if $\tmax$ is chosen to be as large as possible, then 
  \begin{align}\label{eq:local_ex:ext_crit}
    \tmax < \infty
    \quad \text{implies} \quad
    \limsup_{t \nea \tmax} \left( \intom (u \ln u)(\cdot, t) + \intnstom v_t^2 \right) = \infty.
  \end{align}
\end{lemma}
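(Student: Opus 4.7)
The plan is to obtain the weak solution by essentially following the Gajewski--Zacharias construction from \cite{GajewskiZachariasGlobalBehaviourReactiondiffusion1998}, which proceeds via a time-discretized gradient flow of the Lyapunov functional
\begin{align*}
  \mathcal F(u,v) \defs \intom u \ln u - \intom uv + \tfrac12 \intom (|\nabla v|^2 + v^2).
\end{align*}
The only genuinely domain-dependent ingredient in that construction is the Moser--Trudinger embedding $\sob12 \embed \exp(L^2)$ (cf.\ \cite[Proposition~2.3]{ChangYangConformalDeformationMetrics1988}), which is available on any $\Sigma^{1,\alpha}$ domain with constants depending on the smallest interior angle. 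Every remaining step --- solvability of the Euler--Lagrange equation at each time step, the a priori bound coming from the energy inequality $\mathcal F(u(\cdot,t),v(\cdot,t)) + \intnstom (u|\nabla \ln u - \nabla v|^2 + v_t^2) \le \mathcal F(u_0,v_0)$, and the compactness passage as the step size tends to zero --- relies only on Lipschitz regularity of $\partial\Omega$ and hence transfers verbatim to our setting. This yields a weak solution on some initial interval $[0, T_1)$ with the regularity asserted in \eqref{eq:local_ex:reg_u}--\eqref{eq:local_ex:reg_ut_vt}.

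Uniqueness is then a standard energy estimate for the differences $u_1-u_2$ and $v_1-v_2$: testing the first difference against $u_1-u_2$ and using the $L^\infty$-bound from \eqref{eq:local_ex:reg_u} to handle the drift term $u\nabla v$, and the second one against $v_1 - v_2$ and $-\Delta(v_1-v_2)$, closes into a Gronwall inequality on $\|u_1-u_2\|_{\leb2}^2 + \|v_1-v_2\|_{\sob12}^2$.

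For the extensibility criterion, let $\tmax \in (0,\infty]$ be the supremum of all $T \gt 0$ admitting a weak solution on $[0,T)$, and suppose for contradiction that $\tmax \lt \infty$ while
\begin{align*}
  \limsup_{t \nea \tmax} \Bigl( \intom (u \ln u)(\cdot, t) + \intnstom v_t^2 \Bigr) \lt \infty.
\end{align*}
Together with $\|v(\cdot,t)\|_{\leb2} \le \|v_0\|_{\leb2} + \intnst \|u(\cdot,s)\|_{\leb1} \ds$ and the identity $\frac12 \ddt \intom |\nabla v|^2 = -\intom v_t \Delta v$ from the preceding remark, the control on $\intnstom v_t^2$ yields a uniform $\sob12$-bound on $v(\cdot,t)$ for $t \lt \tmax$. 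Along a sequence $t_k \nea \tmax$ on which $\intom u\ln u$ stays bounded, Moser--Trudinger then upgrades $\intom \ure^{p v(\cdot,t_k)}$ to a uniform bound for every $p \lt \infty$; combined with the $L\log L$-bound on $u$ this feeds a standard parabolic bootstrap in the two equations of \eqref{prob}, identical to the one of \cite{GajewskiZachariasGlobalBehaviourReactiondiffusion1998}, producing uniform $L^\infty$-bounds on $u$, $v$ and $\nabla v$ on $[0,\tmax)$. Restarting the local existence construction from $t = t_k$ with $k$ so large that the resulting existence time $T_1$ exceeds $\tmax - t_k$ then contradicts the maximality of $\tmax$.

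The main obstacle is not any individual step but a bookkeeping one: certifying that each argument of Gajewski--Zacharias, and in particular the bootstrap that converts an $L\log L$ bound on $u$ and an $L^2$ bound on $v_t$ into $L^\infty$-bounds, continues to function on a $\Sigma^{1,\alpha}$ domain with corners. Once the Chang--Yang Moser--Trudinger inequality and standard parabolic theory on Lipschitz domains are invoked, the adaptation is essentially mechanical.
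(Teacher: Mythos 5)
Your overall strategy is the same as the paper's: lean on the Gajewski--Zacharias construction (whose only domain-sensitive ingredient is indeed the Trudinger--Moser inequality, available on $\Sigma^{1,\alpha}$ domains via Cianchi/Chang--Yang), and prove \eqref{eq:local_ex:ext_crit} by showing that boundedness of $\intom u\ln u$ and $\intnstom v_t^2$ forces boundedness of the solution, after which the solution can be continued (the paper phrases this via the explicit extensibility alternative from part~(iii) of the proof of \cite[Theorem~3.3]{GajewskiZachariasGlobalBehaviourReactiondiffusion1998} rather than a restart argument, but that difference is cosmetic). The problem lies in your self-contained entry into the bootstrap. You claim that the identity $\frac12\ddt\intom|\nabla v|^2=-\intom v_t\Delta v$ together with the control of $\intnstom v_t^2$ yields a uniform $\sob12$-bound on $v$ up to $\tmax$. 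Expanding $\Delta v=v_t+v-u$, this requires estimating $\intnstom u\,v_t$, i.e.\ an $L^2(\Omega\times(0,\tmax))$-bound on $u$ --- which is exactly the kind of information the subsequent bootstrap is supposed to produce and is not implied by the $L\log L$-bound; as written the argument is circular. (Also, the preliminary estimate $\norm[\leb2]{v(\cdot,t)}\le\norm[\leb2]{v_0}+\int_0^t\norm[\leb1]{u(\cdot,s)}\ds$ is not a valid inequality; the harmless fix is $\norm[\leb2]{v(\cdot,t)}\le\norm[\leb2]{v_0}+t^{1/2}\bigl(\intnstom v_t^2\bigr)^{1/2}$.) The correct entry point of the Gajewski--Zacharias chain is different: one tests the first equation with $u$, writes $\Delta v=v_t+v-u$, absorbs $\intom u^3$ by the $L\log L$--Gagliardo--Nirenberg interpolation and closes a Gronwall argument using $\intnstom v_t^2<\infty$; or one simply cites \cite[Lemmas~4.10 and 4.11]{GajewskiZachariasGlobalBehaviourReactiondiffusion1998}, which take precisely the assumed bounds as input --- this is what the paper does.

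Two smaller points. Your claimed uniform $L^\infty$-bound on $\nabla v$ is delicate near non-convex corners and is not needed: for the restart it suffices to have $v(\cdot,t_k)$ bounded in $\sob1p$ for some $p>2$, which is what the Gajewski--Zacharias construction propagates. Similarly, uniqueness by a one-line Gronwall estimate is glossed over --- handling the term $\intom (u_1-u_2)\nabla v_1\cdot\nabla(u_1-u_2)$ needs $\nabla v_1$ in some $L^p$ with $p>2$, not just the $L^\infty$-bound on $u$; the paper sidesteps this by inheriting uniqueness from \cite[Theorem~3.3]{GajewskiZachariasGlobalBehaviourReactiondiffusion1998}, and you should do the same or supply the missing gradient regularity.
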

\begin{proof}
Let $p>2$ such that $v_0 \in \sob1p$. By the construction in \cite[Theorem~3.3]{GajewskiZachariasGlobalBehaviourReactiondiffusion1998}, for sufficiently small $T>0$ functions $(u,v)\in(C([0,T];L^2(\Omega)))$ can be found which constitute a weak solution of \eqref{prob} in $\Ombar\times[0,T)$ and which satisfy $u(\cdot,T)\in L^\infty(\Omega)$ and $v(\cdot,T)\in W^{1,p}(\Omega)$. With $(u,v)(\cdot,T)$ as initial data, the process can be repeated. We suppose that $\tmax$ is chosen maximally. From the construction in \cite[part~(iii) of the proof of Theorem~3.3]{GajewskiZachariasGlobalBehaviourReactiondiffusion1998}, this means that either $T=\infty$ or one of  
\begin{equation}\label{eq:unboundedGZ}
\int_0^t \norm[\Lom p]{\ure^{-v(\cdot,s)}u(\cdot,s)}\ds,\quad \norm[\Lom\infty]{v(\cdot,t)}, \quad \norm[\Lom2]{\nabla v(\cdot,t)} \quad \text{or} \quad \norm[\Lom2]{\ure^{-v(\cdot,t)}u(\cdot,t)}
\end{equation}
has to be unbounded as $t \nea \tmax$. In order to show \eqref{eq:local_ex:ext_crit}, let us assume that $\tmax<\infty$ and that there exists $c_1 > 0$ such that
  \begin{align*}
    \intom (u \ln u)(\cdot, t) + \intnstom v_t^2 \le c_2
    \qquad \text{for all $t \in (0, \tmax)$}.
  \end{align*}
  By \cite[Lemma~4.10 and Lemma~4.11]{GajewskiZachariasGlobalBehaviourReactiondiffusion1998}, both $u$ and $v$ are then bounded in $\Omega \times (0, \tmax)$ and hence so are all quantities in \eqref{eq:unboundedGZ}.
%\grey{Notes: Size of $T$ depends on $C_2(K,R,T)$ (see p.86), which consists of a GNI constant and bounds on $w$ quantified by $C_1$ (also p.86). $K$ can be disregarded if $\norm[\infty]{v}$ is bounded. $C_1$ further depends on $\norm[\Lom2]{w_0}$ and the bound on $\norm[Q_t]{v_t}$ from \cite[(3.5)]{GajewskiZachariasGlobalBehaviourReactiondiffusion1998}, which depends on $\norm[W^{1,2}]{v_0}$ and $\norm[Q_t]{f}$.}
  Finally, testing the first and second equation with $u_-$ and $v_-$, respectively, yields nonnegativity of both $u$ and $v$ a.e.
\end{proof}

Next, we make use of parabolic regularity theory to show that the solution constructed in Lemma~\ref{lm:local_ex} is also a classical solution.
\begin{lemma}\label{lm:smooth_outside_corners}
  Let $(u, v)$ be the weak solution of \eqref{prob} given by Lemma~\ref{lm:local_ex} with maximal existence time $\tmax$.
  Then $(u, v)$ is also a classical solution in the sense of Definition~\ref{def:sol_concept}.
\end{lemma}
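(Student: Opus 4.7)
The strategy is to localize in $\Ombar\setminus V$ and bootstrap regularity by standard parabolic $L^p$ and Schauder theory, both of which apply because $\partial\Omega$ is $C^{1+\alpha}$ in a neighbourhood of every point of $\partial\Omega\setminus V$. Fix $T_0\in(0,\tmax)$. For each $x_0\in\Ombar\setminus V$, the finiteness of $V$ furnishes a neighbourhood $U=U(x_0)\subset\R^2$ with $\ol U\cap V=\emptyset$ such that $U\cap\partial\Omega$ is either empty or a $C^{1+\alpha}$ graph; after a standard straightening of the boundary, estimates on $\Omega\cap U$ reduce to the flat Neumann case on a half-ball (or on a ball, if $x_0\in\Omega$). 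From Lemma~\ref{lm:local_ex} I already have $u,v\in L^\infty(\Omega\times(0,T_0))$.

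First, I view the second equation of \eqref{prob} as the linear Neumann problem $v_t-\Delta v+v=u$ with bounded right-hand side. Parabolic $L^p$ theory applied locally in $U$ then yields, for every $U'\Subset U$, every $\tau\in(0,T_0)$ and every $p\in[1,\infty)$, that $v_t,\nabla^2 v\in L^p((\Omega\cap U')\times(\tau,T_0))$; in particular, $\nabla v$ is Hölder continuous on compact subsets of $(\Ombar\setminus V)\times(0,\tmax)$. Rewriting the first equation of \eqref{prob} as $u_t=\Delta u-\nabla v\cdot\nabla u-u\,\Delta v$ with Neumann data and applying the same local $L^p$ theory gives $u_t,\nabla^2 u\in L^p$ locally, whence $\nabla u$ is Hölder continuous as well. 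All coefficients and right-hand sides in both equations are therefore locally Hölder continuous on $(\Ombar\setminus V)\times(0,\tmax)$, and interior and boundary parabolic Schauder estimates (available because $\partial\Omega\setminus V$ is $C^{1+\alpha}$) promote $u$ and $v$ to $C^{2+\alpha,1+\alpha/2}$ on compact subsets of $(\Ombar\setminus V)\times(0,\tmax)$. This yields the required $C^{2,1}$ regularity and the pointwise validity of \eqref{prob} off $V$.

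Finally, for continuity down to $t=0$, note that $u_0\in C^0(\Ombar)$ by assumption, and $v_0\in\sob1p$ with $p>2$ embeds into $C^{0,1-2/p}(\Ombar)$ on the planar Lipschitz domain $\Omega$, so also $v_0\in C^0(\Ombar)$. Given these continuous initial data and bounded forcing, standard parabolic $C^{0,\alpha}$ estimates near the $C^{1+\alpha}$ parts of $\partial\Omega$ furnish continuity of $u$ and $v$ up to $t=0$ on compact subsets of $\Ombar\setminus V$. The only potential nuisance throughout is the presence of the corner set $V$, but since the entire argument is carried out on neighbourhoods strictly separated from $V$, this is handled cleanly by localization; no estimate near a corner is actually required.
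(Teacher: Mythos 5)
Your overall plan -- localize away from the vertex set $V$, bootstrap regularity first for $v$ and then for $u$, finish with Schauder theory, and treat continuity at $t=0$ separately -- is the same skeleton as the paper's proof, but the analytic tools you invoke at the boundary do not match the hypothesis \eqref{eq:intro:omega}. Away from $V$ the boundary is only $C^{1+\alpha}$, and parabolic $W^{2,1}_p$ (Calder\'on--Zygmund/maximal regularity) estimates up to such a boundary with Neumann data are in general false: after flattening with a chart that is merely $C^{1,\alpha}$ the equation can only be kept in divergence form with $C^\alpha$ coefficients (the non-divergence form you would need for $L^p$ theory involves second derivatives of the chart, which do not exist), and second derivatives of solutions generically behave like $\operatorname{dist}(\cdot,\partial\Omega)^{\alpha-1}$, so they lie in $L^p$ only for small $p$, not for every $p<\infty$. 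The same objection applies to your final step, since boundary Schauder estimates of $C^{2+\alpha,1+\alpha/2}$ type require a $C^{2+\alpha}$ boundary portion, not $C^{1+\alpha}$. This is exactly the point where the paper takes a different route: it localizes by multiplying with a cut-off $\psi$ satisfying $\partial_\nu\psi=0$ that vanishes near $V$ (so that $\psi u$, $\psi v$ solve conormal problems in divergence form), and it obtains the intermediate H\"older regularity of $u,v$ from Porzio--Vespri and of $\nabla u,\nabla v$ from Lieberman's conormal gradient estimates -- results that only need bounded/H\"older coefficients and a $C^{1,\alpha}$ boundary -- before the final application of Ladyzhenskaya--Solonnikov--Ural'ceva. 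In the interior your $L^p$ bootstrap is fine, but as written the boundary case (which is the whole point of the lemma, since the classical solution concept requires $C^{2,1}$ up to $\Ombar\setminus V$) is not justified under the stated domain regularity.

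The treatment of $t=0$ is also too quick for the first equation. For $v$ your argument is plausible ($v$ solves a linear problem with bounded right-hand side and continuous initial datum), but for $u$ the relevant problem contains the drift/flux term $u\nabla v$, and the weak-solution regularity \eqref{eq:local_ex:reg_u}--\eqref{eq:local_ex:reg_v} gives no uniform-in-time bound on $\nabla v$ down to $t=0$ beyond $C^0([0,T);W^{1,2})$; ``standard parabolic $C^{0,\alpha}$ estimates'' for $u$ up to the initial time therefore do not apply directly. The paper sidesteps this entirely: by the construction behind Lemma~\ref{lm:local_ex}, $u$ and $v$ are continuous on $[0,\tmax)$ as $L^\infty(\Omega)$-valued functions, and combining this uniform-in-space convergence as $t\downarrow 0$ with the smoothness already established for positive times yields $u,v\in C^0((\Ombar\setminus V)\times[0,\tmax))$ without any further parabolic estimate near $t=0$. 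Adopting that observation, and replacing your $W^{2,1}_p$/Schauder steps at the boundary by divergence-form results valid for $C^{1,\alpha}$ boundaries (or restricting them to interior cylinders), would repair the argument.
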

\begin{proof}
  For every $\psi \in \con2$ with $\partial_\nu \psi = 0$ on $\partial \Omega$ and every $\varphi \in \con1$, direct computations show that
  \begin{align*}
        \intntom (\psi u)_t \varphi
    &=  - \intntom (\nabla(\psi u) - 2 u \nabla \psi - \psi u \nabla v) \cdot \nabla \varphi
        + \intntom (u \Delta \psi + u \nabla \psi \cdot \nabla v) \varphi \quad \text{and} \\
        \intntom (\psi v)_t \varphi
    &=  - \intntom (\nabla(\psi v) - 2 v \nabla \psi) \cdot \nabla \varphi
        + \intntom (v \Delta \psi - v \psi + u \psi) \varphi
  \end{align*}
  hold for all $T\in(0,\tmax)$. % see \cite[Proof of Lemma~5.2]{BlackEtAlPossiblePointsBlowup2022} for details.
  The main idea is now to fix a point $x \in \Ombar \setminus V$, where $V$ denotes the set of vertices of $\Omega$, and $0 < \tau < T < \tmax$,
  choose a cut-off function $\psi \in \con2$ with $\partial_\nu \psi= 0$ on $\partial\Ombar$ which equals $1$ in a neighbourhood of $x$ and vanishes on a neighbourhood of $V$
  (existence of such $\psi$ can be shown by, e.g., following the proof of \cite[Lemma~3.2]{BlackEtAlPossiblePointsBlowup2022})
  and then to apply various parabolic regularity results to $\psi u$ and to $\psi v$ which solve certain parabolic equations on smooth domains containing $\supp \psi$.
  In each step, we may choose a new cut-off function $\tilde \psi$ with smaller support so that regularity information on, say, $\nabla (\psi v)$ translates to information on $\nabla v$ on all of $\supp \tilde \psi$.
  
  Indeed, in this way \cite[Theorem~1.3]{PorzioVespriHolderEstimatesLocal1993} first shows that $v$ is Hölder continuous near $S_x \defs \{x\} \times (\tau, T)$,
  whenceupon \cite[Theorem~1.1]{LiebermanHolderContinuityGradient1987} asserts Hölder continuity also of $\nabla v$ near $S_x$.
  Then we can again make use of \cite[Theorem~1.3]{PorzioVespriHolderEstimatesLocal1993} and \cite[Theorem~1.1]{LiebermanHolderContinuityGradient1987}
  to first obtain that $u$ and then also that $\nabla u$ is Hölder continuous near $S_x$.
  Thus, two applications of \cite[Theorem IV.5.3]{LadyzenskajaEtAlLinearQuasilinearEquations1988} yield $u, v \in C^{2, 1}((\Ombar \setminus V) \times (0, \tmax))$.
  As both $u$ and $v$ are also continuous in $[0, \tmax)$ as functions with values in $L^\infty$ by Lemma~\ref{lm:local_ex},
  they also belong to $C^0((\Ombar \setminus V) \times [0, \tmax))$.
  Finally, three testing procedures with test functions supported in $(0, \tmax) \times \Omega$, near $(0, \tmax) \times (\Ombar \setminus V)$ and near $\{0\} \times \Omega$, respectively, show that $(u, v)$ also solves \eqref{prob} classically in $(\Ombar \setminus V) \times [0, \tmax)$.
\end{proof}

%These lemmata immediately imply Proposition~\ref{prop:local_ex}.
\begin{proof}[Proof of Proposition~\ref{prop:local_ex}]
  In Lemma~\ref{lm:local_ex}, a local weak solution of \eqref{prob} has been constructed, which is also a classical solution by Lemma~\ref{lm:smooth_outside_corners}.
\end{proof}

\section{Global existence: Proof of Theorem~\ref{th:ge}}\label{sec:ge}
Throughout this section, we assume \eqref{eq:intro:omega}
and denote the minimal interior angle of $\Omega$ by $\theta$ (and set $\theta = \pi$ if there are no corners).
Moreover, we fix $u_0, v_0$ fulfilling \eqref{eq:intro:init_reg} as well as the solution $(u, v)$ of \eqref{prob} and its maximal existence time $\tmax$ given by Proposition~\ref{prop:local_ex}.

We shall show $\tmax=\infty$ for sufficiently small $\intom u_0$,
which we will achieve by following the reasoning in \cite{NagaiEtAlApplicationTrudingerMoserInequality1997} and \cite{GajewskiZachariasGlobalBehaviourReactiondiffusion1998}. (We also refer to \cite{BilerLocalGlobalSolvability1998} where these techniques have been adapted to systems with different boundary conditions for $v$.)

The approach rests on two crucial observations. 
The first one, which has been first noted in \cite{NagaiEtAlApplicationTrudingerMoserInequality1997} and \cite{GajewskiZachariasGlobalBehaviourReactiondiffusion1998},
is that \eqref{prob} admits an energy-type functional.
\begin{lemma}\label{lm:energy}
  For all $t \in (0, \tmax)$, the estimate
  \begin{align*}
    &\pe  \intom u(t) \ln u(t) - \intom u(t)v(t) + \frac12 \intom v^2(t) + \frac12 \intom |\nabla v(t)|^2
    +   \int_0^t \intom u |\nabla (\ln u - v)|^2 + \int_0^t \intom v_t^2 \\
    &\le \intom u_0 \ln u_0 - \intom u_0v_0 + \frac12 \intom v^2_0 + \frac12 \intom |\nabla v_0|^2
  \end{align*}
  holds.
\end{lemma}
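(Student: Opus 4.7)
The plan is to obtain this as the standard Keller--Segel energy identity, by testing the first equation with $\ln u - v$ and the second with $v_t$ and integrating in time. Formally, one rewrites the first PDE as $u_t = \nabla\cdot(u\nabla(\ln u - v))$, so that testing with $\ln u - v$ and using Neumann boundary data gives
\begin{align*}
  \intom u_t(\ln u - v) = - \intom u|\nabla(\ln u - v)|^2.
\end{align*}
The left-hand side equals $\ddt\intom u \ln u - \ddt\intom uv + \intom u v_t$, since $\intom u$ is conserved and $\intom u_t v = \ddt\intom uv - \intom u v_t$. Testing the second equation with $v_t$ and using $\intom v_t \Delta v = -\tfrac12\ddt\intom |\nabla v|^2$ (justified by the remark following Definition~\ref{def:sol_concept}) gives $\intom u v_t = \intom v_t^2 + \tfrac12\ddt\intom |\nabla v|^2 + \tfrac12\ddt\intom v^2$. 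Substituting and integrating from $0$ to $t$ yields the stated bound as an equality.

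The obstacle is justification: $u$ is only known to lie in $L^\infty_{\mathrm{loc}}\cap L^2_{\mathrm{loc}}(W^{1,2})$ with $u_t\in L^2_{\mathrm{loc}}((W^{1,2})^\star)$, so $\ln u$ is a priori not an admissible test function (both because $u$ need not be bounded away from $0$ and because $\ln u$ need not lie in $W^{1,2}$). The cleanest route is to invoke the Gajewski--Zacharias framework directly: the energy identity of this form is essentially \cite[Proposition~4.1 / Lemma~4.2]{GajewskiZachariasGlobalBehaviourReactiondiffusion1998}, which is proved by their convexity/Bregman-distance method adapted to precisely the weak-solution class constructed in Lemma~\ref{lm:local_ex}. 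Since our solution was constructed exactly by that scheme, I would just cite this and verify that the boundary of $\Omega$, though only piecewise $C^{1+\alpha}$, plays no role: all integration by parts steps either involve the Neumann condition on the smooth part of $\partial\Omega$ or can be localised away from the finite vertex set $V$, which has two-dimensional measure zero and, by Lemma~\ref{lm:smooth_outside_corners}, lies outside the region where the solution is classical.

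An alternative, more self-contained, route would be to work with the approximation $\ln(u+\eps) - v$ as a test function in the weak formulation (which \emph{is} admissible since $u\in L^\infty_{\mathrm{loc}}$ and $\ln(u+\eps)\in L^\infty\cap W^{1,2}$), perform the same computations, and then pass to the limit $\eps\searrow 0$ using monotone convergence for the entropy term $\intom(u+\eps)\ln(u+\eps)$ and Fatou's lemma for the dissipation $\intom u|\nabla(\ln u - v)|^2$; the latter only gives an inequality, which is in any case what the lemma claims. A cut-off $\chi_\delta$ vanishing near $V$ would then be removed by letting $\delta\searrow 0$, again using that $V$ is a finite set and $u,v,\nabla v$ are square-integrable up to the boundary in view of \eqref{eq:local_ex:reg_u}--\eqref{eq:local_ex:reg_v}. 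The technical core of the whole argument is the Fatou/monotone-convergence passage and the verification that no distributional boundary contribution can be hidden at the corners; this is the step where I expect to have to be most careful, and it is also the reason it is attractive simply to cite \cite{GajewskiZachariasGlobalBehaviourReactiondiffusion1998} after noting that their proof is insensitive to corner points.
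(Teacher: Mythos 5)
Your proposal is correct and follows essentially the same route as the paper: the paper simply remarks that the lemma follows by the direct calculation you carry out (testing the first equation with $\ln u - v$ and the second with $v_t$, then integrating in time), citing \cite[Lemma~3.3]{NagaiEtAlApplicationTrudingerMoserInequality1997} for the details rather than \cite{GajewskiZachariasGlobalBehaviourReactiondiffusion1998}. Your additional remarks on justifying $\ln u$ as a test function and on the harmlessness of the corner set are sensible but go beyond what the paper records.
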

\begin{proof}
  This follows by a direct calculation, see for instance \cite[Lemma~3.3]{NagaiEtAlApplicationTrudingerMoserInequality1997}.
  %For $u \ln u$: Consider $u_t [\ln(u+\eps)+1]$ (which is allowed, as $\ln(u+\eps)+1 \in L^2((0, T); \sob12)$) and let $\eps \sea 0$.
\end{proof}

Second, we will rely on the following consequence of the Trudinger--Moser inequality.
\begin{lemma}\label{lm:trudinger_moser}
  Then there exists $C_\Omega > 0$ such that
  \begin{align*}
    \intom \ure^{|\varphi|} \le C_\Omega \exp \left( \frac1{8\theta} \intom |\nabla \varphi|^2 + \frac1{|\Omega|} \intom |\varphi| \right)
    \qquad \text{for all $\varphi \in \sob12$}.
  \end{align*}
\end{lemma}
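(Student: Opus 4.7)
The plan is to read off this lemma from the Trudinger--Moser-type inequality for planar domains of class $\Sigma^{1,\alpha}$ proved by Chang and Yang (Proposition~2.3 of \cite{ChangYangConformalDeformationMetrics1988}). Since the assumption \eqref{eq:intro:omega} asserts precisely that $\Omega$ is such a domain, that result is directly applicable and provides a constant $\tilde C_\Omega > 0$ with
\begin{align*}
  \intom \ure^\psi \le \tilde C_\Omega \exp\left( \frac{1}{8\theta} \intom |\nabla \psi|^2 + \frac{1}{|\Omega|} \intom \psi \right)
  \qquad \text{for every $\psi \in \sob12$,}
\end{align*}
where the factor $\tfrac{1}{8\theta}$ involving the minimal interior angle $\theta$ of $\Omega$ is exactly what the Chang--Yang inequality yields (with $\theta = \pi$ corresponding to the smooth case).

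The only remaining task is to pass from $\ure^{\psi}$ to $\ure^{|\varphi|}$. I would use the trivial pointwise estimate $\ure^{|\varphi|} \le \ure^{\varphi} + \ure^{-\varphi}$, apply the above inequality once with $\psi = \varphi$ and once with $\psi = -\varphi$ (noting that $\intom |\nabla(-\varphi)|^2 = \intom |\nabla \varphi|^2$), and then bound $\pm \frac{1}{|\Omega|} \intom \varphi \le \frac{1}{|\Omega|} \intom |\varphi|$. Summing the two resulting estimates and setting $C_\Omega \defs 2 \tilde C_\Omega$ then produces the claimed inequality.

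There is no real obstacle here: the delicate content (the correct dependence $\tfrac{1}{8\theta}$ on the corner opening, as well as the fact that the inequality survives on domains with finitely many non-vanishing corners) is entirely encapsulated in the cited Chang--Yang result, and the passage from $\ure^{\varphi}$ to $\ure^{|\varphi|}$ is a one-line symmetrisation.
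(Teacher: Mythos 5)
There is a genuine gap, and it lies exactly where you claim there is none: in the source of the angle-dependent constant. The Chang--Yang inequality (\cite[Proposition~2.3]{ChangYangConformalDeformationMetrics1988}) is a statement for \emph{smooth} planar domains (indeed it is formulated in the disc/hemisphere setting relevant to conformal deformation on $S^2$), and it yields the exponential bound with the constant $\frac1{8\pi}$; it says nothing about piecewise $C^{1,\alpha}$ domains with corners, and in particular it does not contain the dependence $\frac1{8\theta}$ on the minimal interior angle. Your assertion that ``the assumption \eqref{eq:intro:omega} asserts precisely that $\Omega$ is such a domain'' conflates the class $\Sigma^{1,\alpha}$ --- which is Cianchi's class, introduced exactly to treat corners --- with the hypotheses of Chang--Yang. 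The delicate content of the lemma, namely that the sharp exponent degrades from $2\pi$ to $2\theta$ when a corner of opening $\theta<\pi$ is present, is the content of \cite[Theorem~1.2]{CianchiMoserTrudingerInequalitiesBoundary2005}, which is what the paper actually invokes (Chang--Yang appearing only as a ``see also'' for the smooth case). As written, your proof cites a result that does not cover the situation at hand, so the key step is unsupported.

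A second, smaller point: Cianchi's theorem is stated in the functional (quadratic-exponent) form, roughly $\intom \exp\bigl(2\theta\,\varphi^2/\norm[\leb2]{\nabla\varphi}^2\bigr)\le C$ for mean-zero $\varphi\in\sob12$, not in the exponential-linear form you start from. One still has to convert it, via $|\varphi|\le|\varphi-\ol\varphi|+|\ol\varphi|$, Young's inequality $|\varphi-\ol\varphi|\le \frac{2\theta(\varphi-\ol\varphi)^2}{\norm[\leb2]{\nabla\varphi}^2}+\frac1{8\theta}\norm[\leb2]{\nabla\varphi}^2$ and $|\ol\varphi|\le\frac1{|\Omega|}\intom|\varphi|$; this is the argument the paper delegates to \cite[Corollary~2.7]{GajewskiZachariasGlobalBehaviourReactiondiffusion1998}. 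Your symmetrisation $\ure^{|\varphi|}\le\ure^{\varphi}+\ure^{-\varphi}$ is harmless but does not address this conversion (and is in fact unnecessary once one estimates $|\varphi-\ol\varphi|$ directly). So the proof becomes correct once you replace the citation by Cianchi's Theorem~1.2 and add the standard conversion step; as it stands, the central claim rests on a reference that does not prove it.
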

\begin{proof}
  As shown for instance in \cite[Corollary~2.7]{GajewskiZachariasGlobalBehaviourReactiondiffusion1998},
  this follows from the Trudinger--Moser inequality proved in \cite[Theorem~1.2]{CianchiMoserTrudingerInequalitiesBoundary2005} (see also \cite[Proposition~2.3]{ChangYangConformalDeformationMetrics1988}).
  %in \cite[Corollary~2.7]{GajewskiZachariasGlobalBehaviourReactiondiffusion1998}, it's even the sharper estimate with |\intom \varphi| instead of \intom|\varphi|
\end{proof}

Similarly as in \cite[Lemma~3.4]{NagaiEtAlApplicationTrudingerMoserInequality1997},
these preparations allow us to show that solutions exist globally whenever the mass of the first component is sufficiently small.
\begin{lemma}\label{lm:tmax_infty}
  Suppose $m \defs \intom u_0 < 4\theta$.
  Then $\tmax \defs \tmax(u_0, v_0) = \infty$.
\end{lemma}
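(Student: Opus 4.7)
The plan is to follow the Nagai--Senba--Yoshida strategy \cite{NagaiEtAlApplicationTrudingerMoserInequality1997} and show that, under the assumption $m < 4\theta$, both quantities appearing in the extensibility criterion \eqref{eq:local_ex:ext_crit} remain bounded on $(0,\tmax)$. The ingredients are mass conservation $\intom u(\cdot,t) = m$ (obtained by integrating the first equation of \eqref{prob} over $\Omega$), the energy inequality of Lemma~\ref{lm:energy}, the Trudinger--Moser inequality of Lemma~\ref{lm:trudinger_moser}, and two applications of the Fenchel--Young inequality for $u\log u$ carried out at different scales.

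First, starting from $u(v-c) \le u\log u - u + \ure^{v-c}$ for arbitrary $c \in \R$, integrating and optimizing in $c$ (using $\intom u = m$), I would derive the dual inequality
\[
  \intom uv \le \intom u\log u + m\log \intom \ure^v - m\log m.
\]
Substituting this into Lemma~\ref{lm:energy} cancels the unsigned term $\intom u\log u$ and yields
\[
  \tfrac12 \intom v^2 + \tfrac12 \intom |\nabla v|^2 + \intnstom v_t^2 \le C + m\log \intom \ure^v.
\]
Lemma~\ref{lm:trudinger_moser} bounds the right-hand side by $C' + \tfrac{m}{8\theta}\intom|\nabla v|^2 + \tfrac{m}{|\Omega|}\intom |v|$, and Young's inequality absorbs the $L^1$ term into $\tfrac14\intom v^2$. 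Since $m < 4\theta$, the coefficient $\tfrac12 - \tfrac{m}{8\theta}$ of $\intom|\nabla v|^2$ is strictly positive, so I conclude that $\intom|\nabla v|^2$, $\intom v^2$ and $\intnstom v_t^2$ are uniformly bounded on $[0,\tmax)$.

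In a second step, I return to Lemma~\ref{lm:energy} with a scaled Fenchel--Young inequality: for $\eps \in (0,1)$,
\[
  \intom uv \le (1-\eps)\intom u\log u - (1-\eps)m + (1-\eps)\intom \ure^{v/(1-\eps)},
\]
which leaves a positive multiple $\eps\intom u\log u$ on the left-hand side. A second application of Lemma~\ref{lm:trudinger_moser}, now to $\varphi = v/(1-\eps)$, bounds $\intom \ure^{v/(1-\eps)}$ in terms of the quantities $\intom|\nabla v|^2$ and $\intom|v|$ that were already controlled in the first step. Hence $\intom u\log u(\cdot,t)$ stays bounded up to $\tmax$, and together with the bound on $\intnstom v_t^2$ the extensibility criterion \eqref{eq:local_ex:ext_crit} forces $\tmax = \infty$.

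The crux is the first step: the threshold $4\theta$ is produced exactly by pairing the factor $\tfrac12$ in front of $\intom|\nabla v|^2$ in the energy with the sharp Trudinger--Moser constant $\tfrac{1}{8\theta}$ from Lemma~\ref{lm:trudinger_moser}, so that the corner geometry enters only through this single constant. The second, $\eps$-scaled application of Fenchel--Young is necessary because in the first step the $u\log u$ contributions cancel exactly rather than leaving a positive remainder, so $\intom u\log u$ cannot be controlled there directly.
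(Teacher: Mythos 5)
Your proof is correct and is essentially the paper's argument: it rests on exactly the same ingredients — mass conservation, the energy inequality of Lemma~\ref{lm:energy}, the Trudinger--Moser inequality of Lemma~\ref{lm:trudinger_moser} with constant $\frac{1}{8\theta}$, and the extensibility criterion \eqref{eq:local_ex:ext_crit} — with the threshold $4\theta$ produced in the same way. The only difference is bookkeeping: the paper performs a single convex-duality step, namely Jensen's inequality with the tilted exponent $(1+\eta)v$ together with the bound $\intom v \le \max\{\intom v_0, m\}$ obtained by testing the second equation with $1$, which yields the bound on $\intom u\ln u$ in one pass, whereas you split it into a first (untilted, exactly cancelling) pass controlling $\intom v^2$, $\intom|\nabla v|^2$ and $\intnstom v_t^2$, followed by an $\eps$-scaled Fenchel--Young pass to recover $\intom u\ln u$; both arrangements are valid.
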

\begin{proof}
  Since $m < 4\theta$, we can find $\eta > 0$ such that $\frac{(1+\eta)m}{8\theta} < \frac12$.
  As integrating the first equation in \eqref{prob} shows $\intom u = m$ in $(0, \tmax)$,
  applying Jensen's inequality and Lemma~\ref{lm:trudinger_moser} yields
  \begin{align*}
          (1+\eta) \intom uv -\intom u \ln u
    &=    m \intom \frac{u}{m} \ln \left(\frac{\ure^{(1+\eta)v}}{u}\right)
     \le  m \ln\left(\intom \frac{u}{m} \frac{\ure^{(1+\eta)v}}{u}\right)
     =    m \ln\left(\frac1m \intom \ure^{(1+\eta)v}\right) \\
    &\le  m \ln\left(\frac{C_\Omega}{m} \exp \left( \frac{1+\eta}{8\theta} \intom |\nabla v|^2 + \frac{1+\eta} {|\Omega|} \intom v \right) \right) \\
    &\le  m \ln\left(\frac{C_\Omega}{m}\right) + \frac12 \intom |\nabla v|^2 + \frac{(1+\eta)m}{|\Omega|} \intom v
    \qquad \text{in $(0, \tmax)$},
  \end{align*}
  where in the last step we have used $\frac{(1+\eta)m}{8\theta} < \frac12$.
  As testing the second equation in \eqref{prob} with $1$ reveals $\intom v \le \max\{\intom v_0, m\}$, we thus conclude that there is $c_1 > 0$ such that
  \begin{align*}
        \intom uv
    \le \frac{1}{1+\eta} \intom u \ln u + \frac{1}{2(1+\eta)} \intom |\nabla v|^2 + c_1
    \qquad \text{in $(0, \tmax)$}.
  \end{align*}
  Together with Lemma~\ref{lm:energy}, this shows boundedness of $\intom u \ln u$ in $(0, \tmax)$.
  As $\int_0^{\tmax} \intom v_t^2$ is also bounded by Lemma~\ref{lm:energy},
  we can conclude from \eqref{eq:local_ex:ext_crit} that $\tmax = \infty$.
\end{proof}

\begin{proof}[Proof of Theorem~\ref{th:ge}]
  According to the definition of $M_\star(\Omega)$ in \eqref{eq:intro:m_lower_star}, $M_\star(\Omega) \ge 4\theta$ immediately follows from Lemma~\ref{lm:tmax_infty}.
\end{proof}

\section{Finite-time blow-up: Proof of Theorem~\ref{th:blow_up} and Theorem~\ref{th:critical_mass}}\label{sec:blow_up}
In this final section, we prove Theorem~\ref{th:blow_up}. 
As crucial ingredient we use the known existence of blow-up solutions on a disk and to this aim recall \cite[Proposition~1.3]{MizoguchiWinklerBlowupTwodimensionalParabolic}:
\begin{lemma}\label{lem:mizowin}
 Let $\Omega=B_R(0)\subset ℝ^2$ with some $R>0$, and let $m>8π$. Then for all $T>0$ and each $p>1$ there exist $ε>0$ and $(u_0,v_0)\in \mc I\coloneqq\set{(\hat u_0,\hat v_0)\in C^0(\Ombar)\times W^{1,\infty}(\Omega)\mid \hat u_0 \text{ and } \hat v_0 \text{ are radially symmetric and positive}$ $\text{in } \Ombar}$ such that $\intom u_0=m$ and such that the solutions of \eqref{prob} for all initial data $(\wt{u}_0,\wt{v}_0)\in \mc I$ satisfying $\norm[\Lom p]{\wt{u}_0-u_0}+\norm[W^{1,2}(\Omega)]{\tilde{v_0}-v_0}<ε$ lead to solutions blowing up before time $T$.
%  \[
%   \mathcal{B}(T)\supset B_{ε}^{L^p(\Omega)\times W^{1,2}(\Omega)}((u_0,v_0))\cap I.
%  \]
\end{lemma}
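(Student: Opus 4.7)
The plan is to combine a quantitative blow-up criterion with continuous dependence of that criterion on initial data within the radial class $\mc I$.

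\textbf{Step 1 (quantitative blow-up criterion).} I would first show that for each $T>0$ and mass $m > 8\pi$ there exists a threshold $K = K(T, m, R) > 0$ such that every radially symmetric solution of \eqref{prob} with $\intom u_0 = m$ and
\begin{align*}
  \mc F(u_0, v_0) \defs \intom u_0 \ln u_0 - \intom u_0 v_0 + \tfrac12 \intom v_0^2 + \tfrac12 \intom |\nabla v_0|^2 \le -K
\end{align*}
ceases to exist before time $T$. The monotonicity of $\mc F$ from Lemma~\ref{lm:energy} keeps the energy below $-K$ along the flow, while the radial Trudinger--Moser inequality (whose sharp constant on the disc is $8\pi$) bounds $\mc F$ from below in terms of $\intom u\ln u$ as long as $\|u(\cdot,t)\|_{L^\infty}$ stays finite; for $m > 8\pi$ these two bounds can only coexist for a time bounded above in terms of $K$, and extracting this life-span estimate explicitly is essentially the content of the delicate radial PDE analysis of \cite{MizoguchiWinklerBlowupTwodimensionalParabolic} (tracking cumulative mass on annuli through the radial first-order reformulation of \eqref{prob}).

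\textbf{Step 2 (construction of low-energy data).} Fixing $m > 8\pi$ and $T > 0$, I would construct radially symmetric positive $(u_0, v_0) \in C^0(\Ombar) \times W^{1,\infty}(\Omega)$ with $\intom u_0 = m$ and $\mc F(u_0, v_0) \le -2K(T, m, R)$ via a concentration family $u_0^{(\delta)}(x) = \alpha_\delta \phi(|x|/\delta) + \beta_\delta$ paired with a Lipschitz profile $v_0^{(\delta)}$ of the same localised shape but larger amplitude, where $\phi$ is a fixed radial bump and $\alpha_\delta, \beta_\delta > 0$ are tuned so that $u_0^{(\delta)}$ is positive and has mass $m$. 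A direct calculation then shows that $-\intom u_0^{(\delta)} v_0^{(\delta)}$ grows in $\delta^{-1}$ strictly faster than $\intom u_0^{(\delta)} \ln u_0^{(\delta)}$ while $\|\nabla v_0^{(\delta)}\|_{L^2}^2$ contributes at lower order, so that $\mc F(u_0^{(\delta)}, v_0^{(\delta)}) \to -\infty$ as $\delta \searrow 0$ and a sufficiently small $\delta$ fixes the desired $(u_0, v_0)$.

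\textbf{Step 3 (openness in $L^p \times W^{1,2}$).} With $(u_0, v_0)$ fixed from Step~2, I would verify continuity of $\mc F$ at $(u_0, v_0)$ along perturbations in $\mc I$ measured in $L^p(\Omega) \times W^{1,2}(\Omega)$: positivity and continuity of the reference $u_0$ on $\Ombar$ let me control $\intom \tilde u_0 \ln \tilde u_0 - \intom u_0 \ln u_0$ through the local Lipschitz bound of $r \mapsto r \ln r$ on a compact interval avoiding $0$, combined with an $L^p$-estimate on the small region where $\tilde u_0$ deviates far from $u_0$; the cross term $\intom \tilde u_0 \tilde v_0$ is handled via Hölder together with the $W^{1,\infty}$-regularity of $v_0$ and the $L^p$-control on $\tilde u_0 - u_0$; and $\tfrac12\intom \tilde v_0^2 + \tfrac12\intom |\nabla \tilde v_0|^2$ depends continuously on $\tilde v_0$ in $W^{1,2}$ directly. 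Shrinking $\varepsilon$ keeps every admissible perturbation inside the sublevel set $\{\mc F \le -K\}$, and Step~1 then delivers blow-up before $T$ uniformly over the $\varepsilon$-ball. The principal obstacle is unambiguously Step~1: in the fully parabolic regime one has no moment identity of parabolic--elliptic type, so the quantitative life-span bound has to be imported from the elaborate radial construction in \cite{MizoguchiWinklerBlowupTwodimensionalParabolic} rather than from elementary functional inequalities.
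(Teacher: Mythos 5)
The paper does not prove this lemma at all: it is recalled verbatim as \cite[Proposition~1.3]{MizoguchiWinklerBlowupTwodimensionalParabolic}, and the citation \emph{is} the proof. Measured against that, your proposal is honest about its structure but not an actual proof: the entire mathematical difficulty sits in your Step~1, and you yourself defer that step to the very same reference. What remains (Steps~2 and~3: driving $\mc F$ to $-\infty$ by concentration for $m>8\pi$, and continuity of $\mc F$ on $\mc I$ in the $L^p\times W^{1,2}$ topology) is plausible and standard, but it re-derives material that is already packaged inside \cite[Proposition~1.3]{MizoguchiWinklerBlowupTwodimensionalParabolic}, so it adds nothing that the one-line citation does not.

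More importantly, the mechanism you sketch for Step~1 would fail if one tried to carry it out. For supercritical mass $m>8\pi$ the Trudinger--Moser inequality does \emph{not} bound $\mc F$ from below along the flow, and the classical argument combining the energy identity of Lemma~\ref{lm:energy} with such functional inequalities yields only that solutions with sufficiently negative initial energy cannot be global \emph{and} bounded --- i.e.\ unboundedness, possibly in infinite time. This is exactly the Horstmann--Wang dichotomy, and the introduction of this paper explicitly flags that such solutions ``may potentially exist globally in time so that this result has no direct influence on $M^\star(\Omega)$'' (cf.\ \cite{HorstmannWangBlowupChemotaxisModel2001}, \cite{FujieJiangNoteConstructionNonnegative2022}). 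No quantitative lifespan estimate of the form ``$\mc F(u_0,v_0)\le -K(T)$ implies $\tmax<T$'' is known to follow from the energy inequality in the fully parabolic setting; establishing finite-time blow-up, together with the stated stability under $L^p\times W^{1,2}$ perturbations, is precisely the delicate content of \cite{MizoguchiWinklerBlowupTwodimensionalParabolic} and cannot be compressed into the two-bounds-coexisting argument you indicate. A further small inaccuracy: the perturbed data $\wt u_0$ in the lemma are not required to satisfy $\intom \wt u_0 = m$, so a blow-up criterion formulated at fixed mass, as in your Step~1, would not apply verbatim to the $\eps$-ball; the criterion must be stable under small changes of mass as well. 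The appropriate ``proof'' here is the citation, as the paper gives it.
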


In restricting these solutions to circular sectors, we have to ensure that the boundary conditions are still satisfied, and note the following elementary fact:
\begin{lemma}\label{lm:neumann_radially_sym}
  Let $R > 0$, $\xi, \nu \in \partial B_1(0)$ with $\xi \cdot \nu = 0$
  and $\varphi \in C^1(B_R(0))$ be radially symmetric.
  Then
  \begin{align*}
    \nabla \varphi(x) \cdot \nu = 0 
    \qquad \text{for all $x \in L \defs \R \xi \cap B_R(0)$}.
  \end{align*}
\end{lemma}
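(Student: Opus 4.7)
The lemma is essentially a direct computation using the chain rule, so my plan is to exhibit the radial structure of $\nabla \varphi$ and observe that any point of $L$ is orthogonal to $\nu$ by construction.

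First, I would write $\varphi(x) = f(|x|)$ for some $f \in C^1([0, R))$, which is possible precisely because $\varphi$ is radially symmetric and of class $C^1$ on the ball. For $x \in B_R(0) \setminus \{0\}$ the chain rule yields
\begin{align*}
  \nabla \varphi(x) = f'(|x|) \frac{x}{|x|}.
\end{align*}
Hence for any $x \in L \setminus \{0\}$, there exists $t \in (-R, R) \setminus \{0\}$ with $x = t\xi$, so that
\begin{align*}
  \nabla \varphi(x) \cdot \nu = f'(|x|) \frac{t\, \xi \cdot \nu}{|x|} = 0
\end{align*}
by the orthogonality assumption $\xi \cdot \nu = 0$.

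For the remaining point $x = 0 \in L$ (if $0 \in L$, which it is since $L$ is a segment through the origin), I would recall that any $C^1$ radially symmetric function on a ball must satisfy $\nabla \varphi(0) = 0$: indeed, the existence of $\nabla \varphi(0)$ together with the radial symmetry forces $f'(0) = 0$ (for instance by comparing directional derivatives along $\xi$ and $-\xi$), so that trivially $\nabla \varphi(0) \cdot \nu = 0$ as well.

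There is no real obstacle here; the only mildly delicate point is the behaviour at the origin, where the formula $\nabla \varphi(x) = f'(|x|) x/|x|$ is not directly available, but this is handled by the symmetry argument just described. Combining the two cases gives the claim for every $x \in L$.
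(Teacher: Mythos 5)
Your argument is correct, but it takes a genuinely different route from the paper's proof. The paper rotates coordinates so that without loss of generality $\xi = (1,0)$ and $\nu = (0,1)$, and then uses only the reflection symmetry $\varphi(x_1, x_2) = \varphi(x_1, -x_2)$ implied by radial symmetry: for each fixed $x_1$ the slice $h \mapsto \varphi(x_1, h)$ is even, hence has derivative $0$ at $h = 0$, which is precisely $\nabla \varphi(x) \cdot \nu = 0$ on $L$; this evenness argument treats all points of $L$, including the origin, in one stroke. You instead write $\varphi(x) = f(|x|)$ and compute $\nabla \varphi(x) = f'(|x|)\, x/|x|$ away from the origin, so that orthogonality to $\nu$ is immediate for $x \in L \setminus \{0\}$, and you then handle $x = 0$ by the separate (and correctly sketched) observation that differentiability at $0$ together with radial symmetry forces $\nabla \varphi(0) = 0$. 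Your approach makes the radial structure of the gradient explicit, at the mild cost of the case distinction at the origin and of justifying the representation $\varphi = f(|\cdot|)$ with $f$ differentiable; the paper's reflection argument avoids polar coordinates entirely and only uses evenness in the direction of $\nu$. Both are complete and equally elementary proofs of the lemma.
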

\begin{proof}
  Without loss of generality, we may assume $\xi = (1, 0)$ and $\nu = (0, 1)$.
  Then $L = \{\,(x_1, x_2) \in B_R(0) : x_2 = 0\,\}$ 
  and the radial symmetry of $\varphi$ implies
  $\varphi(x_1, x_2) = \varphi(x_1, -x_2)$
  for all $(x_1, x_2) \in B_R(0)$;
  that is, $(0, \sqrt{R^2 - x_1^2}) \ni h \mapsto \varphi(x_1, h)$ is an even function for all $x_1 \in (0, R)$ and hence has derivative $0$ at $0$.
\end{proof}

\begin{lemma}\label{lm:blowup}
  Let $\Omega$ be a circular sector with central angle $\theta \in (0, 2\pi)$ and radius $R > 0$
  and let $m > 4 \theta$.
  Then there exist $u_0, v_0$ satisfying \eqref{eq:intro:init_reg} and $\intom u_0 = m$ such that the solution of \eqref{prob} given by Proposition~\ref{prop:local_ex} blows up in finite time.
\end{lemma}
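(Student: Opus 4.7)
The plan is to exploit the radial blow-up result on discs (Lemma~\ref{lem:mizowin}) by restricting a radially symmetric blow-up solution on $B_R(0)$ to the sector. Fix $m > 4\theta$ and set $M \defs \frac{2\pi m}{\theta}$, so that $M > 8\pi$. Applying Lemma~\ref{lem:mizowin} on the disc $B_R(0)$ of the same radius $R$ as the sector with any fixed $T_0 > 0$ and mass $M$ yields initial data $(\tilde u_0, \tilde v_0) \in \mc{I}$ whose associated solution $(\tilde u, \tilde v)$ on $B_R(0)$ blows up at some $\tilde T \in (0, T_0]$. Uniqueness for the disc problem, applied to rotations of the initial data, forces $\tilde u(\cdot, t)$ and $\tilde v(\cdot, t)$ to remain radially symmetric for every $t$. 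Setting $u_0 \defs \tilde u_0|_\Omega$ and $v_0 \defs \tilde v_0|_\Omega$, the regularity \eqref{eq:intro:init_reg} is inherited, and a change to polar coordinates yields $\int_\Omega u_0 = \frac{\theta}{2\pi} \int_{B_R(0)} \tilde u_0 = m$.

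The key step is to identify the solution $(u, v)$ on $\Omega$ produced by Proposition~\ref{prop:local_ex} from these data with the restriction $(\tilde u|_\Omega, \tilde v|_\Omega)$. The latter is a classical solution of \eqref{prob} on the sector: the PDE holds on $\Omega$ because it holds on the disc; the Neumann condition on the circular portion of $\partial \Omega$ is inherited from $\partial B_R(0)$; and the Neumann condition on the two straight edges of $\Omega$ follows from Lemma~\ref{lm:neumann_radially_sym}, since these edges lie on lines through the origin and the functions $\tilde u(\cdot, t), \tilde v(\cdot, t)$ are radially symmetric. The uniqueness clause of Lemma~\ref{lm:local_ex} then identifies $(u, v)$ with $(\tilde u|_\Omega, \tilde v|_\Omega)$ on $[0, \min(\tmax(u_0, v_0), \tilde T))$.

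To finish, I would rule out $\tmax(u_0, v_0) > \tilde T$ by a radial-extension argument: since $(u, v)$ inherits radial symmetry from $(\tilde u, \tilde v)$, if it existed regularly past $\tilde T$ then its radial extension $u^{\mathrm{ext}}(x, t) \defs u(|x| e, t)$ for any fixed unit vector $e \in \Ombar$ (and analogously for $v$) would be a classical solution of \eqref{prob} on $B_R(0) \times [0, \tilde T + \delta)$ with initial data $(\tilde u_0, \tilde v_0)$ for some $\delta > 0$, and uniqueness on the disc would contradict the blow-up of $(\tilde u, \tilde v)$ at $\tilde T$. The main obstacle is making this uniqueness-and-extension chain watertight---especially verifying that the restriction to $\Omega$ and the radial reflection back to $B_R(0)$ both qualify as weak solutions in the sense needed to invoke Lemma~\ref{lm:local_ex}. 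The mass computation and the boundary checks via Lemma~\ref{lm:neumann_radially_sym} are otherwise routine.
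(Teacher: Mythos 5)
Your construction is the same as the paper's: restrict the Mizoguchi--Winkler radial blow-up solution on $B_R(0)$ with mass $\frac{2\pi}{\theta}m>8\pi$ to the sector, check the Neumann condition on the straight edges via Lemma~\ref{lm:neumann_radially_sym}, compute the mass by the angular scaling, and identify the restriction with the solution from Proposition~\ref{prop:local_ex} through the uniqueness part of Lemma~\ref{lm:local_ex}. Up to this point the argument matches the paper.

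The genuine gap is in your final step, the ``radial-extension argument'' ruling out $\tmax(u_0,v_0)>\tilde T$. Radial symmetry of the sector solution is only known on $[0,\tilde T)$, where it coincides with $\wt u,\wt v$; for $t\ge\tilde T$ there is no symmetry group of the sector forcing angle-independence (the sector is not rotation invariant, and reflection across the bisector is not enough). Consequently $u^{\mathrm{ext}}(x,t)\defs u(|x|e,t)$ need not solve \eqref{prob} on the disc past $\tilde T$ -- in polar coordinates the angular part of the Laplacian has no reason to vanish along the chosen ray -- so uniqueness on the disc cannot be invoked, and the contradiction does not materialize as stated. The paper closes this step differently and more simply: by \cite[Theorem~3]{NagaiEtAlChemotacticCollapseParabolic2000} the origin is the (only) blow-up point of $\wt u$, and since $0\in\Ombar$, the restricted solution is itself unbounded near $(0,\tilde T)$; as the maximal weak solution satisfies $u\in L^\infty_{\loc}(\Ombar\times[0,\tmax))$, this forces $\tmax\le\tilde T<\infty$. (Even without locating the blow-up point, radial symmetry alone gives $\norm[\Lom\infty]{u(\cdot,t)}=\norm[L^\infty(B_R(0))]{\wt u(\cdot,t)}\to\infty$ as $t\nearrow\tilde T$, since $\Ombar$ contains points of every radius, which yields the same contradiction; the location of the blow-up point is, however, needed for the boundary blow-up statement in Theorem~\ref{th:blow_up}.) Replace your extension step by one of these observations and the proof is complete.
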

\begin{proof}
  As $\wt m \defs \frac{2\pi}{\theta} m > 8 \pi$, based on Lemma~\ref{lem:mizowin} we let $(\wt u,\wt v)$, with $ (\wt u_0,\wt v_0)\coloneqq (\wt u(\cdot,0),\wt v(\cdot,0))$ satisfying $\intom u_0=\wt m$, be radially symmetric classical solutions of \eqref{prob} on $\wt\Omega \coloneqq B_R(0)$ blowing up at some finite time $T\in(0,\infty)$.  
  Moreover, \cite[Theorem~3]{NagaiEtAlChemotacticCollapseParabolic2000} asserts that $0$ is the only blow-up point of $\wt u$. 
  We set $(u_0, v_0) \defs (\wt u_0, \wt v_0){\big|}_{\Ombar}$ and note that due to the radial symmetry of $u_0$ we have
  \begin{align*}
    \intom u_0 = \frac{\theta}{2\pi} \int_{\tilde \Omega} \wt u_0 = \frac{\theta}{2\pi} \wt m = m
  \end{align*}
  and that $0$ is also a blow-up point of $u$.
  We next claim that $(u, v) \defs (\wt u, \wt v){\big|}_{\Ombar \times [0, T)}$ is a classical solution of \eqref{prob}:
  That the differential equations, the initial conditions and the boundary conditions for the circular arc are fulfilled follows immediately
  from the fact that $(u, v)$ is a solution of \eqref{prob} in $\wt \Omega \times [0, T)$.
  Finally, Lemma~\ref{lm:neumann_radially_sym} shows that the boundary conditions are also fulfilled on the remaining (smooth part of the) boundary.
  Therefore, $(u, v)$ is a solution of \eqref{prob} in the sense of Definition~\ref{def:sol_concept}
  and due to the uniqueness statement in Lemma~\ref{lm:local_ex} thus has to coincide with the solution given by Proposition~\ref{prop:local_ex}.
\end{proof}

\begin{proof}[Proof of Theorem~\ref{th:blow_up}]
  This is a direct consequence of \eqref{eq:intro:m_upper_star} and Lemma~\ref{lm:blowup}.
\end{proof}

At last, we note that Theorem~\ref{th:ge} and Theorem~\ref{th:blow_up} entail Theorem~\ref{th:critical_mass}.
\begin{proof}[Proof of Theorem~\ref{th:critical_mass}]
  As the minimal interior angle of a circular sector with central angle $\theta$ is $\min\{\theta, \frac{\pi}{2}\}$,
  Theorem~\ref{th:critical_mass} results as a combination of Theorem~\ref{th:ge} and Theorem~\ref{th:blow_up}.
\end{proof}

%\bibliography{bib}

\footnotesize

\end{document}